    \def\qed{\hfill$\sqcap\kern-8.0pt\hbox{$\sqcup$}$\\}
    \def\beq{\begin{eqnarray}}
    \def\eeq{\end{eqnarray}}
    \def\beqq{\begin{eqnarray*}}
    \def\eeqq{\end{eqnarray*}}
    \def\zz{{\mathbb Z}}
    \def\n{{\mathbb N}}
    \def\q{{\mathbb Q}}
    \def\re{\textnormal {Re}}
    \def\im{\textnormal {Im}}
    \def\p{{\mathbb P}}
    \def\e{{\mathbb E}}
    \def\r{{\mathbb R}}
    \def\aa{\mathcal A}
    \def\ll{{\mathcal L}}
    \def\tl{\tilde {\mathcal L}}
    \def\mm{{\mathcal M}}
    \def\c{{\mathbb C}}
    \def\Li{{\textnormal{Li}}}	
    \def\d{{\textnormal d}}
    \def\i{{\textnormal i}}
        \def\ee{{\textnormal e}} 
	\newtheorem{theorem}{Theorem}
	\newtheorem{proposition}{Proposition}
	\newtheorem{definition}{Definition}
\title{On the density of the supremum of a stable process}
\author{
A. Kuznetsov
\thanks{{Research supported by the
Natural Sciences and Engineering Research Council of Canada.}}  \\ \\
Dept. of Mathematics and Statistics\\  York University \\
4700 Keele Street 
\\Toronto, ON \\ M3J 1P3,  Canada 
 }
\date{Current version: \today}
\begin{document}


\maketitle

\begin{abstract}
\bigskip
 We study the density of the supremum of a strictly stable L\'evy process.
As was proved recently in \cite{HubKuz2011}, for almost all irrational values of the stability parameter $\alpha$ this 
density can be represented by an absolutely convergent series.
We show that this result is not valid  for {\it all} irrational values of $\alpha$: we construct a dense uncountable set of irrational numbers $\alpha$ for which the series does not converge absolutely. 
Our second goal is to investigate in more detail the important case when $\alpha$ is rational. 
We derive an explicit formula for the Mellin transform of the supremum, which is given in terms of Gamma function and dilogarithm. In order to illustrate the usefulness of these results we perform several numerical experiments and discuss their implications. Finally, we state some interesting connections that this problem has to other areas of Mathematics and Mathematical Physics, such as q-series, Diophantine approximations
and quantum dilogarithms, and we also suggest several open problems.   
\end{abstract}

{\vskip 0.5cm}
 \noindent {\it Keywords}: stable process, supremum, q-series, q-Pochhammer symbol, continued fractions, Diophantine approximations,  double gamma function, Barnes function, dilogarithm, quantum dilogarithm
{\vskip 0.5cm}
 \noindent {\it 2000 Mathematics Subject Classification }: 60G52 

\newpage



\section{Introduction}


Let $X$ be a strictly stable L\'evy process, which is started at zero and is described by the stability parameter $\alpha \in (0,1)\cup(1,2)$
and the skewness parameter $\beta \in [-1,1]$. We assume that $|X|$ is not a subordinator, which implies that 
$|\beta|\ne 1$ if $\alpha \in (0,1)$. It is well-known that the  
characteristic exponent $\Psi(z):=-\ln(\e[\exp(\i z X_1)])$ is given by
\beq\label{def_Psi0}
\Psi(z)=c|z|^{\alpha} \left(1-\i \beta \tan\left(\frac{\pi \alpha}2\right) {\textnormal {sign}}(z)\right), \;\;\; z\in \r,
\eeq
where $c$ is a positive constant (see Theorem 14.15 in \cite{Sato} or Chapter 8 in \cite{Bertoin}). The characteristic exponent $\Psi(z)$ satisfies 
$\Psi(a^{\frac{1}{\alpha}}z)=a \Psi(z)$ for all $a>0$, which implies the important scaling property: the 
processes $\{X_{at}\; : t\ge 0\}$ and $\{a^{\frac{1}{\alpha}}X_t\; : t\ge 0\}$ have the same distribution. 
In particular, without loss of generality we can assume that the parameter $c$ in \eqref{def_Psi0} is normalized so that
\beq\label{condition_c}
c^{2}\left(1+\beta^2  \tan\left(\frac{\pi \alpha}2\right)^2\right)=1.
\eeq

As we will see later, it is more natural to parametrize stable processes 
by parameters $(\alpha,\rho)$ instead of $(\alpha,\beta)$, 
where the positivity parameter $\rho:=\p(X_1>0)$ can be expressed in terms of $(\alpha,\beta)$ as follows 
\beq\label{def_rho}
\rho=\frac{1}{2}+\frac{1}{\pi\alpha} \tan^{-1} \left(\beta \tan\left(\frac{\pi \alpha}2 \right)\right),
\eeq
see Section 2.6 in \cite{Zolotarev1986}.
Using Theorem 14.15 in \cite{Sato} one can check that with our normalization \eqref{condition_c} the L\'evy measure of $X$ is  
given by
\beq\label{def_Levy_measure}
\Pi(\d x) =  \frac{|x|^{-1-\alpha}}{2\sin \left(\frac{\pi \alpha}2 \right)}
\bigg[ \sin(\pi \alpha \rho) {\mathbf 1}_{\{x>0\}} + 
  \sin(\pi \alpha (1-\rho)) {\mathbf 1}_{\{x<0\}} \bigg] \d x.
\eeq
It is also easy to see that under transformation \eqref{def_rho} the set of parameters $(\alpha,\beta)$ that we have described above is bijectively mapped onto a set 
\beqq
\aa:=\{\alpha \in (0,1), \; \rho \in (0,1)\} \cup  \{\alpha\in(1,2), \; \rho \in [1-\alpha^{-1}, \alpha^{-1}]\},
\eeqq
which is shown on Figure \ref{fig1}. 
We call the set ${\mathcal A}$ an {\it admissible set of parameters}. Note that formula
\eqref{def_Levy_measure} implies that when $\alpha \in (1,2)$ and $\rho =1-\alpha^{-1}$ \{$\rho=\alpha^{-1}$\} the process $X$ 
is spectrally-positive \{resp. spectrally-negative\}.

We define the supremum of the process $X$ as $S_t:=\sup\{X_u\; : 0\le u \le t\}$. 
Note that due to the scaling property of stable processes, we have $S_t \stackrel{d}{=} t^{\frac{1}{\alpha}} S_1$, 
thus it is sufficient to study the distribution of $S_1$.
Our main object of interest is the density  of $S_1$, which will be denoted by $p(x;\alpha,\rho)$ (or by $p(x)$ if we do not need to stress the dependence on parameters).

The study of the distribution of $S_1$ and its various integral transforms is an interesting problem that has a long history. 
As the following identity shows, an equivalent problem is to study the distribution of the first passage time $\tau_h^+:=\inf \{ t > 0 \; : \; X_t>h \}$
for any positive $h$ 
\beqq
\p(\tau_h^+<t)=\p(S_t>h)=\p\left(S_1 > h t^{-\frac{1}{\alpha}} \right).
\eeqq
Another closely related problem is the investigation of the Wiener-Hopf factorization for stable processes. The positive
Wiener-Hopf factor is defined as 
\beqq
\phi(z)=\phi(z;\alpha,\rho):=\e\left[e^{-zS_{\ee(1)}}\right], \;\;\; \re(z)\ge 0, 
\eeqq 
where $\ee(q)$ is independent of $X$ and has exponential distribution with  $\e[\ee(q)]=1/q$. We see that the Wiener-Hopf factor gives us information about the supremum evaluated at an exponential time  $\ee(q)$, while we would like to obtain the infomation about the supremum at the deterministic time. The following identity,  which is based on the scaling property, connects these two problems 
\beq\label{piece1}
\int\limits_0^{\infty} z^{w-1}\e\left[e^{-zS_{\ee(1)}} \right] \d z=
\Gamma(w) \e\left[ \left(S_{\ee(1)}\right)^{-w}\right]=\Gamma(w) \e\left[ \ee(1)^{-\frac{w}{\alpha}} \left(S_1\right)^{-w} \right]=
\Gamma(w) \Gamma\left( 1-\frac{w}{\alpha}\right)\e\left[ \left(S_1\right)^{-w} \right].
\eeq
The second piece of the puzzle is provided by the integral representation for the Wiener-Hopf factor
\beq\label{piece2}
\ln\left(\e\left[e^{-zS_{\ee(q)}} \right]\right)=\frac{z}{2\pi } \int\limits_{\r}   \ln\left(\frac{q}{q+\Psi(u)}\right) \frac{\d u}{u(u-\i z)}, \;\;\; \re(z)>0.
\eeq
Formula \eqref{piece2} is in fact valid for all L\'evy processes satisfying a mild regularity condition (see Lemma 4.2 in \cite{Mordecki}).

It seems that Darling was the first to investigate the supremum of a stable process. 
In the paper \cite{Darling1956} which was published in 1956 he has obtained \eqref{piece1} and \eqref{piece2} for symmetric stable processes ($\rho=1/2$). As an application of his results Darling has found a 
simple expression for the density of the supremum of Cauchy process ($(\alpha,\rho)=(1,1/2)$).
Then in 1969 Heyde \cite{Heyde1969} has shown that formulas \eqref{piece1} and \eqref{piece2} are also valid for general non-symmetric 
stable processes. Many interesting results were discovered by Bingham and were published in his 1973 paper \cite{Bingham1973}.
Bingham has also used an approach based on \eqref{piece1} and \eqref{piece2} and has obtained an absolutely convergent series representation  for the density of $S_1$ when the process $X$ is spectrally negative (in this case the distribution of $S_1$ is related to Mittag-Leffler functions). 
Note that the spectrally-negative case is rather simple, as it follows from
the general theory that the supremum $S_{\ee(q)}$ for any spectrally-negative L\'evy process must necessarily have exponential distribution (see formula (8.2) in \cite{Kyprianou}).  In the general case Bingham has obtained the first asymptotic term
for $\p(S_1\le x)$ as $x\to 0^+$. 
His results also  provide the first indication of the fact
that the analytic structure of the distribution of the supremum may be related to certain arithmetic properties of the parameters: 
he derives a general asymptotic result for the Wiener-Hopf factor $\phi(z)$, and this result is valid only if 
$\alpha$ is irrational (see formula (9) in \cite{Bingham1973}).

Bingham \cite{Bingham1973} states that in general Darling's integral (the integral in the right-hand side of \eqref{piece2})
can not be evaluated explicitly, and that the approach based on \eqref{piece1} and \eqref{piece2} can only succeed
in the spectrally-negative case.  
That this prediction was not entirely correct was shown in 1987 by Doney \cite{Doney1987}. Among other results, Doney has shown that Darling's integral can be evaluated explicitly if
the process $X$ belongs to the class 
${\mathcal C}_{k,l}$, which is defined by the condition 
\beq\label{Ckl}
\rho+k=\frac{l}{\alpha},
\eeq
where $k$ and $l$ are integers.  These classes of processes can be considered as generalizations of the spectrally one-sided processes, for which $\rho=1/\alpha$ or $\rho=1-1/\alpha$. It is easy to see that the curves 
$\rho=\{l/\alpha\}$ (here $\{x\}\in [0,1)$ denotes the fractional part of $x\in \r$) are dense in the set of admissible parameters ${\mathcal A}$. These curves in fact create a rather interesting pattern (see Figure \ref{fig1}), and as we will see later, the dark/white regions on this graph correspond to different series representations for the density 
$p(x;\alpha,\rho)$.  
The results by Doney clearly illustrate the importance of the arithmetic properties of parameters. 
We see that depending on whether $\alpha$ is rational or not the formula \eqref{Ckl} defines a countable/dense or a finite set of $\rho$ for  
which there exists an explicit formula for the Wiener-Hopf factor $\phi(z)$.

In the 1990s and 2000s  there have appeared many other important results related to extrema of stable processes. 
Bertoin \cite{Bertoin01091996} has computed the Laplace transform of the first exit time from a finite interval in the spectrally 
one-sided case, the result is given in terms of Mittag-Leffler functions. 
 An interesting duality result relating space-time Wiener-Hopf factorization of stable processes with parameters 
$(\alpha,\rho)$ and $(1/\alpha,\alpha \rho)$ was discovered by Fourati \cite{Fourati}, a special case of this duality was given earlier by Doney (see Theorem 3 in \cite{Doney1987}). 
An absolutely convergent series representation for $p(x)$ was obtained by 
 Bernyk, Dalang and Peskir \cite{Bernyk2008} in the spectrally-positive case. Doney \cite{Doney2008}
 has found the first asymptotic term of $p(x)$ as $x\to +\infty$ in the spectrally-positive case, 
 and Patie \cite{Patie2009} has given a complete asymptotic expansion.
The spectrally positive case and various connections with Mittag-Leffler functions were investigated by Simon \cite{Simon2010}. 
In the general case the first term of the asymptotic expansion of $p(x)$ as $x \to 0^+$ or $x\to+\infty$ was obtained by Doney and Savov \cite{Doney2010}. Peskir \cite{Peskir2008} and Simon \cite{Simon2011} study 
the first hitting time of spectrally-positive stable processes. Graczyk and Jakubowski \cite{Graczyk2011} 
(and independently Kuznetsov \cite{Kuz2010}) have discovered a series representation for the logarithm of the Wiener-Hopf factor:
as we have mentioned above, an asymptotic version
of this result was found earlier by Bingham (see formula (9) in \cite{Bingham1973}). 
The first passage time of stable processes was studied in a recent paper by Graczyk and  Jakubowski \cite{Graczyk2011b}.

\begin{figure}
\centering
\subfloat[][$N=10$, $\alpha \in (0,2)$ and $\rho \in (0,1)$]{\label{fig_Vdxds_set1}\includegraphics[height =5.25cm]{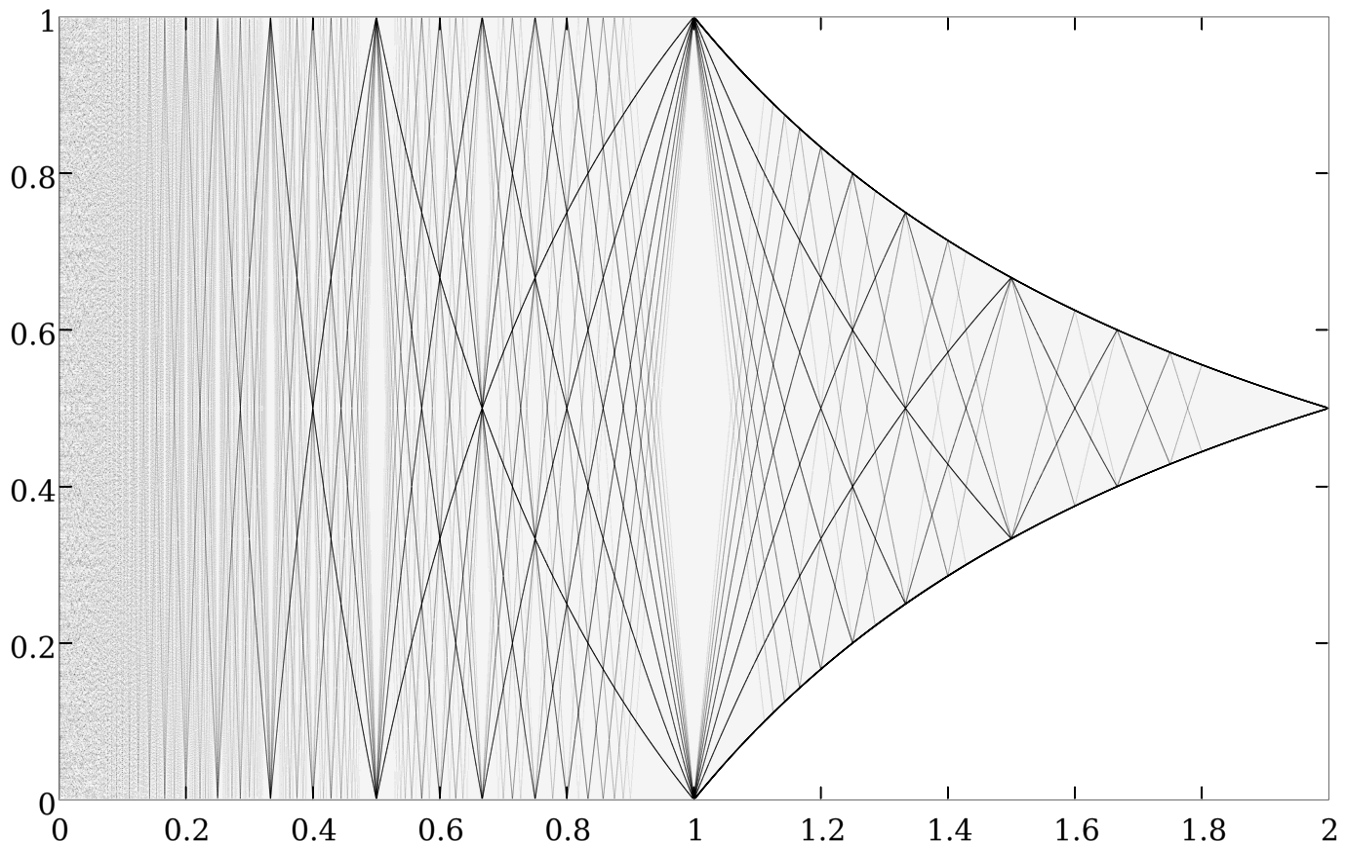}}
\hspace{0.5cm}
\subfloat[][$N=100$, $\alpha \in (0,2)$ and $\rho \in (0,1)$]{\label{fig_Vdxds_set2}\includegraphics[height =5.25cm]{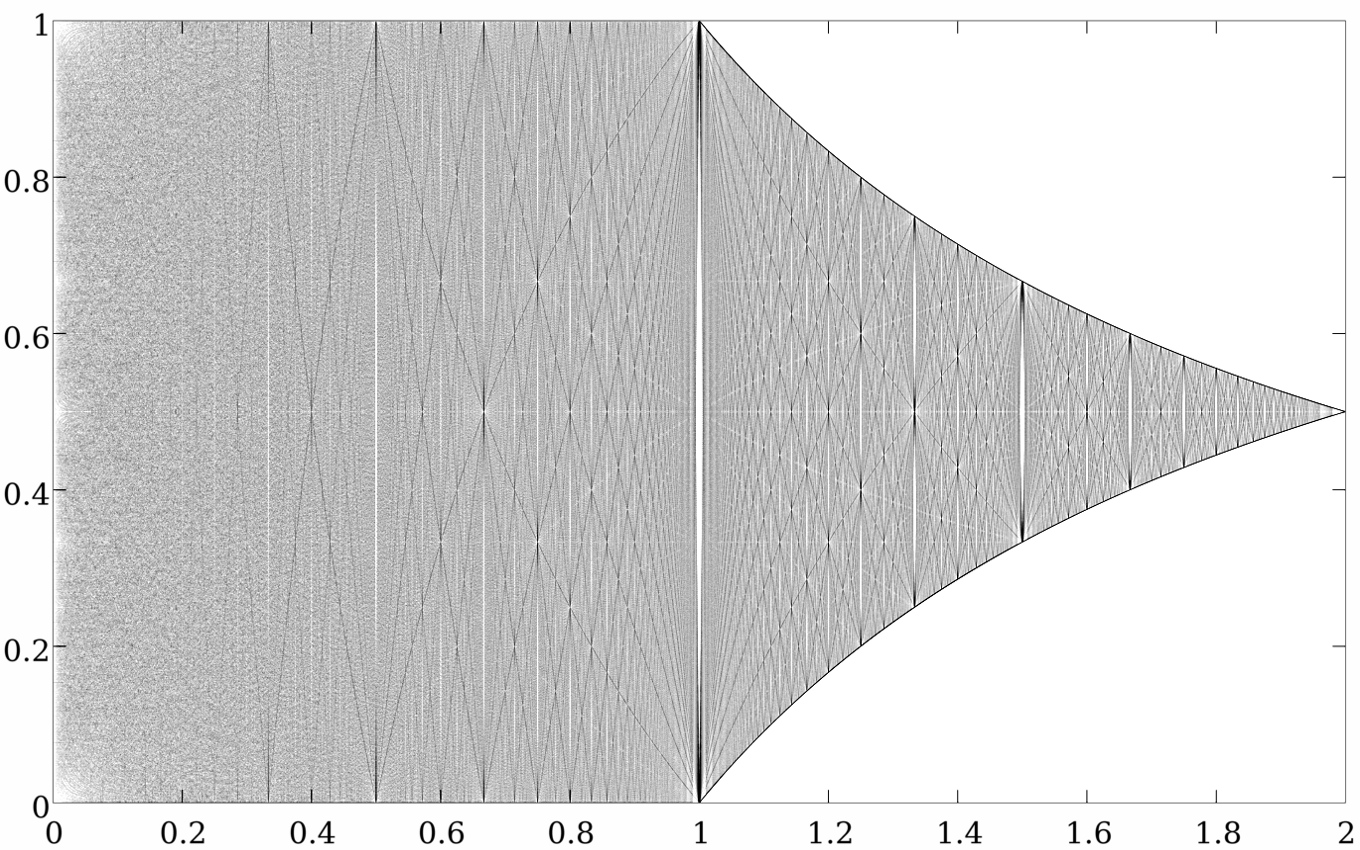}} \\
\subfloat[][$N=100$, $\alpha \in (1/2,1)$ and $\rho \in (1/4,3/4)$]{\label{fig_Vdxds_set3}\includegraphics[height =5.25cm]{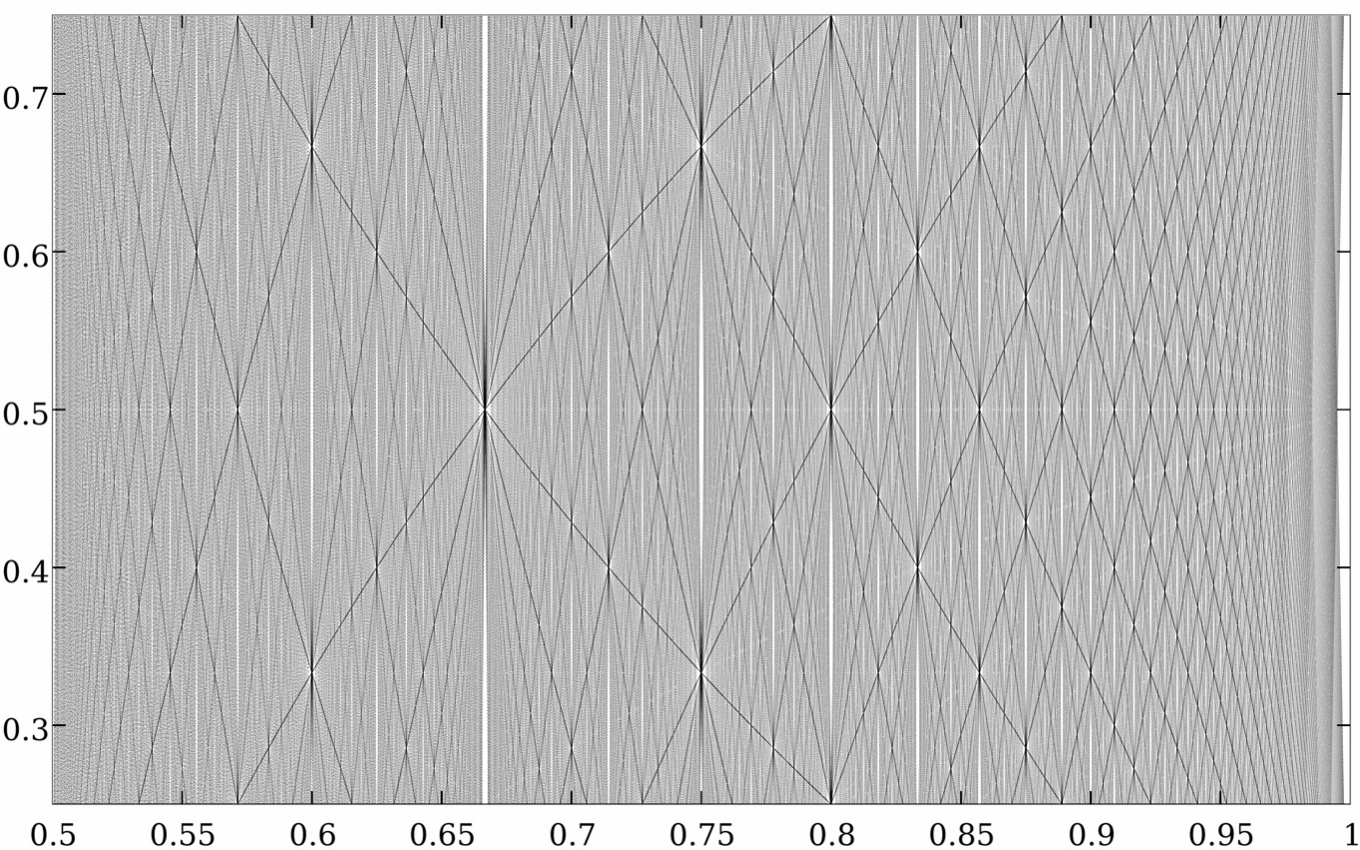}} 
\hspace{0.5cm}
\subfloat[][$N=100$, $\alpha \in (3/2,2)$ and $\rho \in (1/3,2/3)$]{\label{fig_Vdxds_set4}\includegraphics[height =5.25cm]{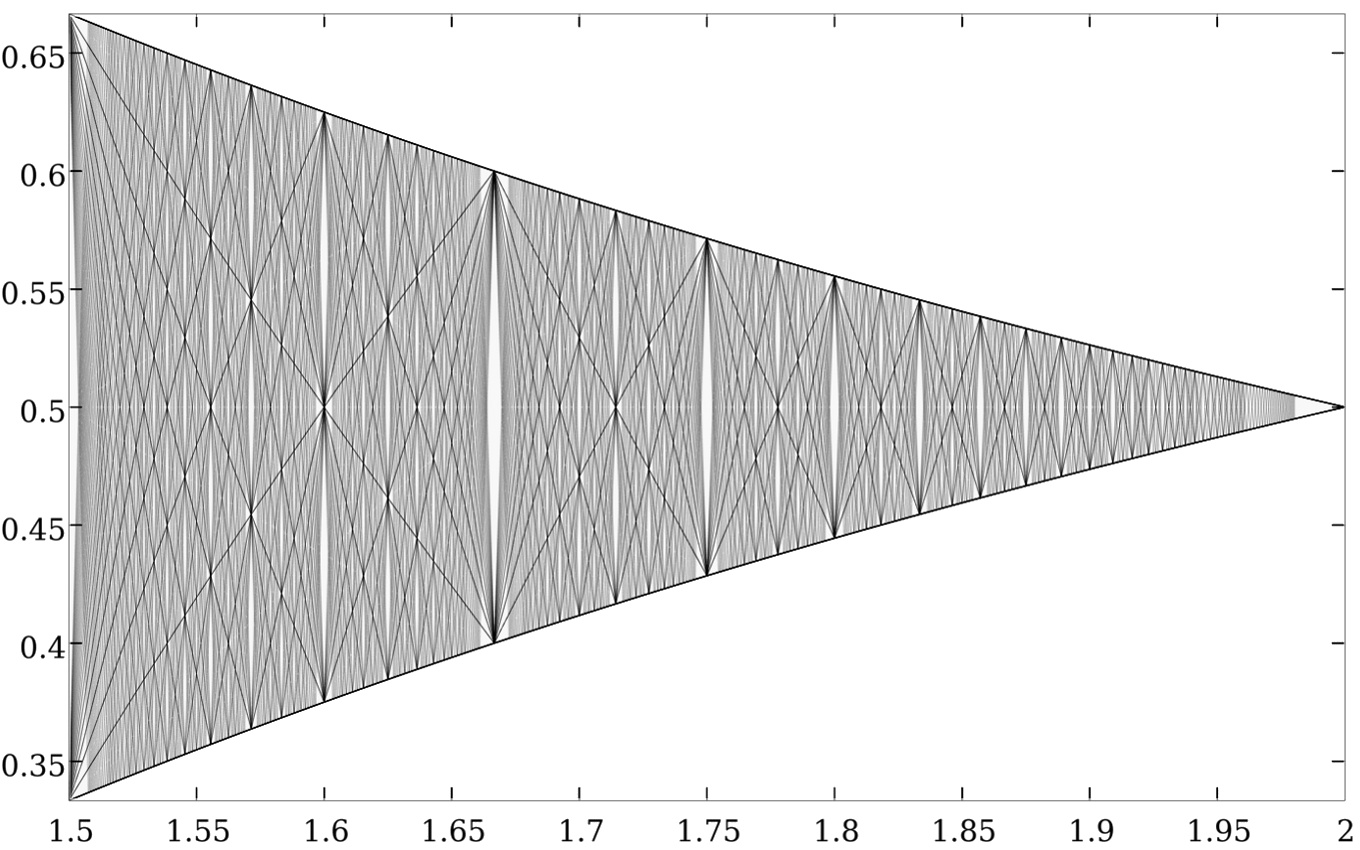}}  
\caption{The set of admissible parameters ${\mathcal A}$ and the curves $\rho=\{ l \alpha^{-1} \}$ for $|l|\le N$.} 
\label{fig1}
\end{figure}

In \cite{Kuz2010} we have followed the approach based on \eqref{piece1} and \eqref{piece2}, and we have obtained
explicit formulas for the Wiener-Hopf factor $\phi(z)$ and for the Mellin transform of $S_1$ in the general case. 
We have also found a complete asymptotic expansion for $p(x)$ as $x\to 0^+$ or $x\to +\infty$, and 
in the case when $X\in C_{k,l}$ we have proved that this asymptotic expansion gives an absolutely convergent series representation
for $p(x)$. The key ingredient which allowed us to derive all these results is the so-called double gamma function
$G(z;\tau)$ (see Section \ref{section_Mellin}), which was introduced and studied by Barnes in \cite{Barnes1899}, \cite{Barnes1901}. 
Hubalek and Kuznetsov in \cite{HubKuz2011} have proved that the asymptotic series
for $p(x)$ obtained in \cite{Kuz2010} in fact provides an absolutely convergent series representation for almost all irrational values of $\alpha$. In this result the arithmetic properties of $\alpha$ are crucial: The result is valid for all irrational $\alpha$ which are not ``too close" to rational numbers. Kuznetsov and Pardo in \cite{KUPA11} have shown that it is also possible to derive some of the results in \cite{Kuz2010} as well as several new results (such as the entrance law of the excursion measure of a stable process reflected at the supremum/infimum) using 
a different technique,  which is not based on \eqref{piece1} and \eqref{piece2}. In some sense this approach is more direct
and more probabilistic in spirit, it's main ingredients are exponential functionals of hypergeometric L\'evy processes and the Lamperti's transformation for positive self-similar Markov processes and on .

In this paper we have three main objectives. First of all, the proof of the convergence of the series for $p(x)$  given in \cite{HubKuz2011} leaves open a possibility that this result is not only true for {\it almost all} irrational $\alpha$, 
but may be valid for {\it all} irrational $\alpha$. As we show in Section \ref{section_convergence}, 
this conjecture is not true: we construct an uncountable dense set of irrational $\alpha$ for which the series does
not converge absolutely for almost all $\rho$. Our second goal is to investigate the case when $\alpha$ is rational: 
in this case the infinite series representation for $p(x)$ given in \cite{Kuz2010} and \cite{HubKuz2011} is not well-defined. 
This case is clearly important for applications, after all when we perform numerical computations we can only work with rational approximations to irrational numbers. In Section \ref{section_Mellin} we will show that when $\alpha$ is rational the Mellin transform of $S_1$ can be given explicitly in terms of rather simple and easily computable functions (Gamma function and dilogarithm). Our third goal is 
to compute the density of $S_1$ numerically. Given the unusual properties of various representations of $p(x)$, 
it is a non-trivial question of whether these results can be used for numerical computations of $p(x)$. 
As we will see in  Section \ref{section_numerics}, the answer to this question is positive, and the 
density of the supremum can be computed quite easily both in the case when $\alpha$ is rational and when $\alpha$ is irrational. Finally, in Section \ref{section_conclusion} 
we discuss the  connections that this problem has to other areas of Mathematics and Mathematical Physics, and
we also suggest several open problems.


\section{Convergence properties of the series representation of $p(x)$}\label{section_convergence}


Let us first introduce several notations and definitions. 
For $x\in \r$ we define the 
``floor" function
 $\lfloor x \rfloor:=\max \{ n\in \zz \; : \; n\le x\}$, the fractional part $\{x\}:=x-\lfloor x \rfloor$ and 
 we let $\|x\|:=\min \{ |x-n| \; : \; n \in \zz\}$ denote the distance to the nearest integer. 
 For any real $\alpha$ we define the set 
\beq\label{def_C_alpha}
{\mathcal C}(\alpha):=\big \{\rho \in (0,1) \; : \; \rho=\{l\alpha^{-1}\}  {\textnormal{ for some }} l \in \zz \big \}.  
\eeq
Thus $\rho \in {\mathcal C}(\alpha)$ if and only if  $X \in {\mathcal C}_{k,l}$ for some integers 
$k$ and $l$. Note that if $\alpha=m/n$ for some co-prime $m$ and $n$, then ${\mathcal C}(\alpha)$ is the finite set 
\beqq
{\mathcal{C}}(m/n)=\{j/m \; : \; j=1,2,\dots,m-1\}.
\eeqq
The sets ${\mathcal{C}}(m/n)$ are clearly visible on Figure \ref{fig1}, they consist of points of intersections of different curves $\rho=\{l/\alpha\}$.  If $\alpha$ is irrational then the set ${\mathcal C}(\alpha)$ is countable and dense in $(0,1)$.

The following set of real numbers was introduced in \cite{Kuz2010} and \cite{HubKuz2011}:
\begin{definition}\label{def_set_L}
Let ${\mathcal L}$ be the set of all real irrational numbers $x$, for which there exists a constant $b>1$ such that the inequality
\beq\label{eqn_def_set_L}
\bigg| x -\frac{p}{q} \bigg| < \frac{1}{b^{q}}
\eeq 
is satisfied for infinitely many integers $p$ and $q$.
\end{definition}

As was proved in \cite{HubKuz2011}, the set ${\mathcal L}$ is rather small: it has Hausdorff dimension zero, which implies that it has Lebesgue measure zero. This set is closed under addition/multiplication by non-zero
rational numbers, therefore it is dense. It can also be conveniently characterized using continued fraction representation of a real number, which is defined as
\beqq	
 x=[a_0;a_1,a_2,\dots]=a_0+\cfrac{1}{a_1+\cfrac{1}{a_2+ \dots }}
 \eeqq
where $a_0 \in {\mathbb Z}$ and $a_i \in {\mathbb N}$ for $i\ge 1$. For $x\notin \q$ the continued fraction has infinitely many terms; truncating it after $n$ steps gives us   a rational number $p_n/q_n=[a_0;a_1,a_2,...,a_n]$, which is called the $n$-th convergent. As was proved in \cite{HubKuz2011}, $x \in {\mathcal L}$ if and only if there exists a constant $b>1$ such that the inequality $a_{n+1}>b^{q_n}$ is satisfied for infinitely many $n$.

For any irrational $\alpha$  we  define sequences $\{a_{m,n}\}_{m\ge 0,n\ge 0}$ and  $\{b_{m,n}\}_{m\ge 0,n\ge 1}$ as
follows
\beq\label{def_a_mn}
a_{m,n}:=\frac{(-1)^{m+n} }{\Gamma\left(1-\rho-n-\frac{m}{\alpha}\right)\Gamma(\alpha\rho+m+\alpha n)}
\prod\limits_{j=1}^{m} \frac{\sin\left(\frac{\pi}{\alpha} \left( \alpha \rho+ j-1 \right)\right)} {\sin\left(\frac{\pi j}{\alpha} \right)} 
\prod\limits_{j=1}^{n} \frac{\sin(\pi \alpha (\rho+j-1))}{\sin(\pi \alpha j)},
\eeq
\beq\label{def_b_mn}
b_{m,n}:=\frac{\Gamma\left(1-\rho-n-\frac{m}{\alpha}\right)\Gamma(\alpha\rho+m+\alpha n) }{\Gamma\left(1+n+\frac{m}{\alpha}\right)\Gamma(-m-\alpha n)}
a_{m,n}.
\eeq

The following Theorem was proved in 
\cite{HubKuz2011} (the statement related to the asymptotic behavior of $p(x)$ has appeared earlier in \cite{Kuz2010}). 
\begin{theorem}\label{thm_main}
Assume that $\alpha \notin {\mathcal L} \cup \q$. Then for all $x>0$ 
\beq\label{eqn_p1}
p(x)& = &  x^{-1-\alpha } \sum\limits_{n\ge 0} \sum\limits_{m\ge 0}b_{m,n+1} x^{-m-\alpha n}, \;\; {\textnormal { if }} \alpha \in (0,1), \\
\label{eqn_p2}
p(x)& = &  x^{\alpha\rho-1} \sum\limits_{n\ge 0} \sum\limits_{m\ge 0} a_{m,n} x^{ m+\alpha n}, \;\;\;\;\;\;\;  {\textnormal { if }} \alpha \in (1,2).
\eeq
The series converges absolutely and uniformly on compact subsets of $(0,\infty)$.  
Moreover, for every $\alpha \notin \q$ the series \eqref{eqn_p1} \{ \eqref{eqn_p2} \} provides complete asymptotic expansion as $x\to +\infty$ \{resp. $x\to 0^+$\}.
\end{theorem}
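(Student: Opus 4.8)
The plan is to recover $p(x)$ from the Mellin transform $\mm(w):=\e[(S_1)^{-w}]$, whose explicit meromorphic form can be extracted from the Wiener--Hopf factor. I would start from the identity \eqref{piece1}, which reads $\int_0^\infty z^{w-1}\e[e^{-zS_{\ee(1)}}]\,\d z=\Gamma(w)\Gamma(1-\tfrac{w}{\alpha})\mm(w)$, and combine it with the closed-form representation of $\phi(z)=\e[e^{-zS_{\ee(1)}}]$ in terms of the double gamma function $G(z;\tau)$ obtained in \cite{Kuz2010}. This produces a closed expression for $\mm(w)$ as a ratio of products of (double) gamma functions. The structural fact I would extract is that, for $\alpha$ irrational, $\mm(w)$ is meromorphic with only \emph{simple} poles, arranged along two arithmetic families governed by the shifts $m+\alpha n$ and $-n-\tfrac{m}{\alpha}$, and that the residue of $\mm$ at the $(m,n)$-th pole is exactly the coefficient $a_{m,n}$ or $b_{m,n}$ of \eqref{def_a_mn}--\eqref{def_b_mn}. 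The reduction of the double-gamma ratio to the elementary trigonometric products appearing there — via the quasi-periodicity of $G(z;\tau)$ and the reflection formula for $\Gamma$ — is a computation I expect to be routine but lengthy; irrationality of $\alpha$ is what keeps the two pole families disjoint and the poles simple.

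Next I would invert the transform, writing $p(x)=\tfrac{1}{2\pi\i}\int_{c-\i\infty}^{c+\i\infty}\mm(w)x^{w-1}\,\d w$ on a vertical line inside the strip of analyticity and pushing the contour across the relevant pole family: for $\alpha\in(0,1)$ one crosses the poles at $w=-\alpha-m-\alpha n$, whose residues assemble into \eqref{eqn_p1} and whose (negative, decreasing) exponents give the expansion as $x\to+\infty$; for $\alpha\in(1,2)$ one crosses the poles at $w=\alpha\rho+m+\alpha n$, giving \eqref{eqn_p2} and the expansion as $x\to0^+$. At this stage the \emph{asymptotic} statement — valid for \emph{every} irrational $\alpha$ — follows from the standard Mellin--Barnes machinery: after crossing $N$ poles one only needs the shifted contour integral to be of smaller order than the last retained term, and for that it suffices that $\mm(w)$ decays rapidly in the imaginary direction, which is inherited from the decay of $|\Gamma|$ along vertical lines and requires no arithmetic hypothesis on $\alpha$.

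The genuinely delicate step, and the one where $\alpha\notin\ll$ enters, is upgrading this asymptotic expansion to an \emph{absolutely convergent} double series. The obstacle is a competition: although the gamma factor $\Gamma(\alpha\rho+m+\alpha n)^{-1}$ decays super-exponentially (factorially) in $m+\alpha n$, the trigonometric products $\prod_{j=1}^{n}|\sin(\pi\alpha j)|^{-1}$ and $\prod_{j=1}^{m}|\sin(\pi\alpha^{-1}(\alpha\rho+j-1))|^{-1}$ in the denominators may \emph{grow}, since $|\sin(\pi\alpha j)|\ge c\,\|\alpha j\|$ is small whenever $\alpha j$ is close to an integer. To control these I would use the lower bounds $|\sin(\pi\alpha j)|\ge c\,\|\alpha j\|$ together with the continued-fraction characterisation of $\ll$ recalled after Definition \ref{def_set_L}: the assumption $\alpha\notin\ll$ means that for every $b>1$ one has $a_{n+1}\le b^{q_n}$ for all large $n$, which prevents $\|\alpha j\|^{-1}$ from being super-exponentially large and thus prevents the products from outgrowing the factorial decay of the gamma factors. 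Making this quantitative — an estimate on the product of distances to the nearest integer, converted into a root- or ratio-test bound for the double series — is in my view the main technical hurdle; once it is in place, the remainder in the contour shift tends to $0$ as $N\to\infty$, which simultaneously closes the expansion into an everywhere-convergent series and yields the absolute and locally uniform convergence on $(0,\infty)$. The sharpness of the hypothesis is confirmed by the companion result of Section \ref{section_convergence}, where for $\alpha\in\ll$ exactly this competition is lost.
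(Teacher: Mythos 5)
Your overall route is the same as that of the proof this paper relies on: Theorem \ref{thm_main} is not proved here but quoted from \cite{Kuz2010} and \cite{HubKuz2011}, and those papers argue exactly as you propose --- the double gamma expression for $\mm$ obtained from the Wiener--Hopf factor and \eqref{piece1}, two families of poles whose residues are $a_{m,n}$ and $b_{m,n}$, a Mellin--Barnes contour shift giving the asymptotic expansion for \emph{every} irrational $\alpha$ (correctly, with no arithmetic input beyond simplicity of the poles), and a Diophantine estimate on the sine products, via the continued-fraction characterization of $\ll$, to get absolute convergence and to drive the remainder integrals to zero. One small correction to your pole analysis: the two families lie on opposite sides of the strip of analyticity, so they can never collide for any $\alpha$; irrationality is needed only to keep the poles \emph{within} each family distinct (hence simple).

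The genuine gap is in the one estimate that carries the theorem, where your ``thus'' hides a false implication. From $\alpha\notin\ll$ you correctly extract the termwise bound: for every $b>1$ one has $\|j\alpha\|^{-1}\le C(b)\,b^{j}$ for all $j\ge 1$, so no single factor is super-exponentially large. But multiplying these termwise bounds gives only $\prod_{j=1}^{n}\|j\alpha\|^{-1}\le C^{n}\,b^{n(n+1)/2}$, and $e^{(\ln b)\,n^{2}/2}$ swamps the factorial decay $e^{-c\,n\ln n}$ of $1/\Gamma(\alpha\rho+m+\alpha n)$ for \emph{every} $b>1$, however close to $1$. So individual-term control does not ``prevent the products from outgrowing the factorial decay''; the core lemma of \cite{HubKuz2011} must instead exploit the sparsity structure of small denominators: $\|j\alpha\|$ is small only when $j$ is near a continued-fraction denominator $q_k$, so one splits $\sum_{j\le n}\ln\|j\alpha\|^{-1}$ over the scales $q_k\le n$, uses the best-approximation properties together with $\sum_{q_k\le n}q_k=O(n)$ and $\ln a_{k+1}\le q_k\ln b$ (eventually, for every $b>1$), and concludes $\prod_{j\le n}\|j\alpha\|^{-1}\le C(\epsilon)\,e^{\epsilon n\ln n}$ for every $\epsilon>0$ --- a bound the Gamma decay can absorb. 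The same refined bounds, not the termwise ones, are what control $\mm$ on the shifted vertical lines so that the remainder vanishes as $N\to\infty$, which is needed to show the convergent series actually equals $p(x)$. Two minor slips: the second product requiring a lower bound is $\prod_{j\le m}|\sin(\pi j/\alpha)|$ (these are the denominators in \eqref{def_a_mn}; the factors $\sin(\pi\alpha^{-1}(\alpha\rho+j-1))$ sit in the numerators and are trivially at most $1$ in modulus), and it is handled by the same lemma because $\alpha\notin\ll$ if and only if $\alpha^{-1}\notin\ll$.
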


The above theorem, if suitably interpreted, is also true if $\alpha \in {\mathcal L} \cup \q$ and $\rho \in {\mathcal C}(\alpha)$. In this 
case $X \in {\mathcal C}_{k,l}$ for some integers $k$ and $l$, and it turns out that expressions 
\eqref{def_a_mn}, \eqref{def_b_mn} for
coefficients $a_{m,n}$ and $b_{m,n}$ can be simplified, and the resulting formulas 
are well-defined even for rational values of $\alpha$. See Theorem  10 in \cite{Kuz2010} for all the details.

If we are only interested in the asymptotic expansion of $p(x)$, then 
Theorem \ref{thm_main} and Theorem  10 in \cite{Kuz2010} give us a complete picture. 
If $\alpha \notin \q$  \{ $\alpha \in \q$ and $\rho \in {\mathcal C}(\alpha)$ \} we have explicit asymptotic expansion
due to Theorem \ref{thm_main}  \{ resp. Theorem  10 in \cite{Kuz2010} \}. At the same time, in the remaining case
when $\alpha \in \q$ and $\rho\notin {\mathcal C}(\alpha)$ we know that the asymptotic expansion will contain some logarithmic terms coming from the multiple poles of the Mellin transform of $S_1$ (see the proof of Theorem 9 in \cite{Kuz2010}). 
Note that even if $\alpha\in \q$ and $\rho\notin {\mathcal C}(\alpha)$ the first terms of the series \eqref{eqn_p1}, \eqref{eqn_p2} which are well-defined still provide the first asymptotic terms for $p(x)$: this follows easily from the proof of Theorem 9 in \cite{Kuz2010}.

The situation with the absolute convergence of the series \eqref{eqn_p1}, \eqref{eqn_p2} is much less clear. 
Let us summarize the present state of knowledge. 
\begin{itemize}
\item[(i)] If $\alpha \in \q$ then the series \eqref{eqn_p1}, \eqref{eqn_p2} are well-defined and converge only for a finite set of $\rho \in {\mathcal C}(\alpha)$. This case corresponds to the points of intersections of black curves and to ``white vertical intervals" on Figure
\ref{fig1}.  
\item[(ii)] If $\alpha \notin \q$ then the series \eqref{eqn_p1}, \eqref{eqn_p2} converge for a dense countable set of $\rho \in {\mathcal C}(\alpha)$. 
\item[(iii)] If $\alpha \notin \q$ and $\alpha \notin \ll$ then the series \eqref{eqn_p1}, \eqref{eqn_p2} converge for all $\rho \in (0,1)$. 
\end{itemize}

\noindent
We see that the only remaining case for which we don't know anything about the convergence of the series \eqref{eqn_p1}, \eqref{eqn_p2} 
is when $\alpha \in {\mathcal L}$. The most aesthetically pleasing and natural conjecture would be that 
condition $\alpha \notin \ll$ in (iii) can be dropped: 
\begin{center}
{\bf { Conjecture: if $\alpha \notin \q$ then the series \eqref{eqn_p1}, \eqref{eqn_p2} converge for all $\rho \in (0,1)$. }}
\end{center}
Our main result in this section is that this conjecture is in fact {\bf false}. We will show that by slightly modifying the definition of the set $\ll$ we can construct an uncountable dense subset $\tl \subset \ll$ such that for every $\alpha \in \tl$ the series \eqref{eqn_p1}, \eqref{eqn_p2} does not converge absolutely for almost all $\rho \in (0,1)$. This set $\tl$ is defined as follows.

\begin{definition}\label{def_set_tl}
 Let $\tl$ be the set of all real irrational numbers $x$, for which there exist $\epsilon>0$ and $b>1$ such that the inequality
\beq\label{eqn_def_set_L}
 \bigg| x -\frac{p}{q} \bigg| < b^{-q \ln(q)^{1+\epsilon}}
\eeq 
is satisfied for infinitely many integers $p$ and $q$.
\end{definition}

As the next Proposition shows, the structure of the set ${\mathcal L}$ can also be conveniently described in terms of the continued fraction representation of real numbers. The proof of this Proposition is omitted, as it 
is essentially identical to the proof of Proposition 1 in \cite{HubKuz2011}.
\begin{proposition}\label{prop_set_L}

${}$
 \begin{itemize}
  \item[(i)] If $x\in \tl$ then $z x \in \tl$ and $z+x \in \tl$ for all  $z \in {\mathbb Q}\setminus\{0\}$.
  \item[(ii)]  $x \in \tl$ if and only if $x^{-1} \in \tl$.
  \item[(iii)] Let $x=[a_0;a_1,a_2,\dots]$. Then $x \in \tl$ if and only if there exist $\epsilon>0$ and $b>1$ such that the inequality
$$a_{n+1}>b^{q_n \ln(q_n)^{1+\epsilon}}$$ is satisfied for infinitely many $n$. 
 \end{itemize}
\end{proposition}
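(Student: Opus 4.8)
The plan is to reduce all three parts to a single robustness property of the gauge function $\psi(q):=q(\ln q)^{1+\epsilon}$ governing Definition \ref{def_set_tl}, after which the argument runs parallel to the treatment of $\ll$ in \cite{HubKuz2011}. The only structural novelty is that $\psi$ is not homogeneous, as the gauge $q\mapsto q$ is for $\ll$; it is merely \emph{asymptotically} homogeneous of degree one, since $\ln(cq)\sim\ln q$ forces $\psi(cq)/\psi(q)\to c$ for every fixed $c>0$. I would first record the two consequences that do all the work. First, (a): for every $c>0$ and $b>1$ there is $b'>1$ with $b^{-\psi(q)}\le(b')^{-\psi(cq)}$ for all large $q$, and the same holds with $cq$ replaced by $c_q q$ whenever $c_q\to c>0$; indeed $\psi(q)/\psi(cq)\to 1/c$, so it suffices to take $b'=b^{1/(2c)}$. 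Second, (b): for any $b>b'>1$ the ratio $(b/b')^{\psi(q)}$ grows faster than every fixed power of $q$, because $\psi(q)/\ln q\to\infty$; consequently constant and polynomial factors can always be absorbed by shrinking the base infinitesimally. These are exactly the places where the proof for $\ll$ used exact homogeneity.

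For parts (i) and (ii) I would argue directly from Definition \ref{def_set_tl}. Fix $z=s/t\in\q\setminus\{0\}$ with $t\ge1$ and a sequence of reduced approximations $\left|x-\frac pq\right|<b^{-\psi(q)}$ with $q\to\infty$ (the denominators necessarily tend to infinity since $x$ is irrational). Then $z+x$ is approximated by $(pt+sq)/(qt)$ and $zx$ by $(sp)/(tq)$, each with new denominator $Q=tq\to\infty$ and an error at most a constant multiple of $b^{-\psi(q)}=b^{-\psi(Q/t)}$; applying (a) with $c=t$ and then (b) converts this into $(b')^{-\psi(Q)}$, so $z+x,\,zx\in\tl$. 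For the reciprocal, I write $\left|x^{-1}-\frac qp\right|=\frac{q}{|p|\,|x|}\left|x-\frac pq\right|$ and note $q/|p|\to|x|^{-1}$, so the approximant $q/p$ has denominator $|p|\asymp q$ and error at most a constant times $b^{-\psi(q)}$; applying (a) with $c_q=|p|/q\to|x|$ together with (b) yields $x^{-1}\in\tl$. Irrationality is preserved because $z\ne0$ and $x\notin\q$.

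Part (iii) rests on the classical estimates $\frac{1}{q_n(q_{n+1}+q_n)}\le\left|x-\frac{p_n}{q_n}\right|\le\frac{1}{q_nq_{n+1}}$ and $a_{n+1}q_n<q_{n+1}<(a_{n+1}+1)q_n$. In the forward direction, if $a_{n+1}>b^{\psi(q_n)}$ for infinitely many $n$, then $\left|x-\frac{p_n}{q_n}\right|<\frac{1}{q_nq_{n+1}}<\frac1{a_{n+1}}<b^{-\psi(q_n)}$, so $x\in\tl$. For the converse I would take a reduced approximation with $\left|x-\frac pq\right|<b^{-\psi(q)}$; once $q$ is large one has $b^{-\psi(q)}<\frac1{2q^2}$ by (b), so Legendre's theorem identifies $p/q$ with a convergent $p_n/q_n$, and distinct fractions yield infinitely many indices $n$. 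The lower estimate then gives $\frac1{4a_{n+1}q_n^2}\le\left|x-\frac{p_n}{q_n}\right|<b^{-\psi(q_n)}$, whence $a_{n+1}>b^{\psi(q_n)}/(4q_n^2)$, and the polynomial factor is absorbed by (b) to produce $a_{n+1}>(b')^{\psi(q_n)}$ for infinitely many $n$.

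The main obstacle, relative to the argument for $\ll$, is precisely the loss of exact homogeneity of the gauge: every step that in \cite{HubKuz2011} used $c\cdot q=(cq)$ verbatim must now be routed through estimate (a), and one must verify that the exponent $\epsilon$ can be kept fixed while only the base $b$ deteriorates. The single most delicate point is the converse of (iii): one has to be sure that the super-exponential approximation rate $b^{-\psi(q)}$ eventually beats $\frac1{2q^2}$, so that Legendre's theorem applies, and that the spurious factor $1/(4q_n^2)$ is harmless. Both facts follow from (b), since $\psi(q)/\ln q\to\infty$, which is the feature that makes $\psi$ behave, for all practical purposes, like the linear gauge used for $\ll$.
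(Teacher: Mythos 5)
Your proof is correct and takes essentially the approach the paper intends: the paper omits the proof of Proposition \ref{prop_set_L}, stating it is essentially identical to that of Proposition 1 in \cite{HubKuz2011} for the set $\ll$, and your argument is exactly that adaptation (Legendre's theorem plus the standard convergent inequalities), with the only new feature — the non-homogeneity of the gauge $q\ln(q)^{1+\epsilon}$ — correctly handled by absorbing constant and polynomial factors into the base $b$ while keeping $\epsilon$ fixed.
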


We see from Proposition \ref{prop_set_L} that the set $\tl$ is an uncountable set, it is closed
under addition and multiplication by non-zero rational numbers, in particular it is a dense set. 
It is of course a rather small set: being a subset of ${\mathcal L}$ it also has Hausdorff dimension zero and, therefore, Lebesgue measure zero.

The next Theorem is our main result in this section. 
\begin{theorem}\label{main_theorem} 
For every $\alpha \in \tl$
there exists a set of real numbers ${\mathcal B}(\alpha)$ having zero Hausdorff dimension 
(and, therefore,  zero Lebesgue measure) and such that for every $\rho \notin {\mathcal B}(\alpha)$ and every $x>0$ the series
\eqref{eqn_p1}, \eqref{eqn_p2} do not converge absolutely. 
\end{theorem}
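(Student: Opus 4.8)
The plan is to show that absolute convergence fails already for the one-dimensional slice $n=0$ of \eqref{eqn_p1}, \eqref{eqn_p2}, and to trap the exceptional $\rho$ inside a single level set. Fix $\alpha\in\tl$ and, using Proposition~\ref{prop_set_L}(iii), pick $b>1$, $\epsilon>0$ and a subsequence of convergents $p_{n_k}/q_{n_k}$ of $\alpha$ with $a_{n_k+1}>b^{q_{n_k}\ln(q_{n_k})^{1+\epsilon}}$. If either series converged absolutely at some $x>0$, then so would its slice $n=0$; writing $c_m=a_{m,0}$ (case $\alpha\in(1,2)$, with variable $y=x$) or $c_m=b_{m,1}$ (case $\alpha\in(0,1)$, with variable $y=1/x$), this slice is the ordinary power series $\sum_{m\ge0}c_m y^m$, whose absolute convergence at a single $y>0$ forces $\limsup_m|c_m|^{1/m}<\infty$. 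It therefore suffices to construct a set ${\mathcal B}(\alpha)$ of zero Hausdorff dimension such that $\limsup_m|c_m|^{1/m}=+\infty$ for every $\rho\notin{\mathcal B}(\alpha)$, since this destroys absolute convergence of the slice, and a fortiori of the full series, for every $y>0$.

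The driving mechanism is a single denominator factor of the product in \eqref{def_a_mn} (both $a_{m,0}$ and $b_{m,1}$ carry the product $\prod_{j=1}^{m}\sin(\pi(\rho+(j-1)/\alpha))/\sin(\pi j/\alpha)$ inherited through \eqref{def_b_mn}). As $p_{n_k}/q_{n_k}$ is a convergent,
\[ \left\|\frac{p_{n_k}}{\alpha}\right\| \le \left|\frac{p_{n_k}}{\alpha}-q_{n_k}\right| = \frac{q_{n_k}}{\alpha}\left|\alpha-\frac{p_{n_k}}{q_{n_k}}\right| < \frac{1}{\alpha q_{n_k}}\,b^{-q_{n_k}\ln(q_{n_k})^{1+\epsilon}}, \]
so, using $|\sin(\pi t)|\le\pi\|t\|$, the factor $j=p_{n_k}$ satisfies $|\sin(\pi p_{n_k}/\alpha)|^{-1}>(\alpha q_{n_k}/\pi)\,b^{q_{n_k}\ln(q_{n_k})^{1+\epsilon}}$, which is super-exponentially large. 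Bounding the remaining denominator factors by $\prod_{j<p_{n_k}}|\sin(\pi j/\alpha)|\le1$ isolates this one; Stirling's formula shows that the two $\Gamma$-factors (after applying the reflection formula to the one with large negative argument) contribute only $\exp(O(q_{n_k}\ln q_{n_k}))$, and the residual $\sin$-factor produced by the reflection is bounded below by a positive constant for each fixed $\rho\in(0,1)$, since $p_{n_k}/\alpha-q_{n_k}\to0$.

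Collecting these estimates and using $p_{n_k}\sim\alpha q_{n_k}$, for each fixed $\rho\in(0,1)$ one gets
\[ \frac{1}{p_{n_k}}\ln|c_{p_{n_k}}| \ge (1+o(1))\frac{\ln b}{\alpha}\ln(q_{n_k})^{1+\epsilon} - C\ln q_{n_k} + \frac{1}{p_{n_k}}\ln N_{p_{n_k}}(\rho), \quad N_m(\rho):=\prod_{j=1}^{m}\left|\sin\!\Big(\pi\rho+\frac{\pi(j-1)}{\alpha}\Big)\right|, \]
with $C=C(\alpha)$. I then define ${\mathcal B}(\alpha):=\{\rho\in(0,1): N_{p_{n_k}}(\rho)<b^{-\frac12 q_{n_k}\ln(q_{n_k})^{1+\epsilon}}\text{ for infinitely many }k\}$. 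For $\rho\notin{\mathcal B}(\alpha)$ the last term above is eventually at least $-(1+o(1))\frac{\ln b}{2\alpha}\ln(q_{n_k})^{1+\epsilon}$, the super-exponential gain beats both the $O(\ln q_{n_k})$ loss from the $\Gamma$-factors and this compensation, and hence $p_{n_k}^{-1}\ln|c_{p_{n_k}}|\to+\infty$, giving $\limsup_m|c_m|^{1/m}=+\infty$ as required.

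Finally, I would bound $\dim_H{\mathcal B}(\alpha)$. Since each factor of $N_m$ is at most $1$, the inequality defining ${\mathcal B}(\alpha)$ forces the geometric mean, and hence the smallest factor, to be at most $b^{-q_{n_k}\ln(q_{n_k})^{1+\epsilon}/(2p_{n_k})}$; combined with $\|s\|\le\tfrac12|\sin(\pi s)|$ this yields some $1\le j\le p_{n_k}$ with $\|\rho+(j-1)/\alpha\|<r_k$, where $r_k=b^{-\frac{1}{4\alpha}\ln(q_{n_k})^{1+\epsilon}}$ is smaller than every power of $q_{n_k}^{-1}$. Thus ${\mathcal B}(\alpha)\subseteq\bigcap_K\bigcup_{k\ge K}\bigcup_{j=1}^{p_{n_k}}\{\rho:\|\rho+(j-1)/\alpha\|<r_k\}$, which for each $K$ is covered by intervals of total $s$-cost $\sum_{k\ge K}p_{n_k}(2r_k)^s$; this tends to $0$ for every $s>0$, because $p_{n_k}\le q_{n_k}$ while $r_k^s$ decays faster than any power of $q_{n_k}^{-1}$. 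Hence $H^s({\mathcal B}(\alpha))=0$ for all $s>0$, i.e. $\dim_H{\mathcal B}(\alpha)=0$. The main obstacle is precisely this control of $N_m(\rho)$: a priori the numerator product could be super-exponentially small through a conspiracy of many moderately small factors, and the passage to the geometric mean — reducing matters to a single small value of $\|\rho+(j-1)/\alpha\|$ — is what simultaneously isolates the bad $\rho$ and delivers the zero-dimensional bound.
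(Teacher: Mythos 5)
Your proposal is correct, and it rests on the same two-pronged mechanism as the paper's proof: along the subsequence of extremely good convergents $p_{n_k}/q_{n_k}$ supplied by $\alpha\in\tl$, a single denominator factor $|\sin(\pi\,\cdot\,)|^{-1}$ in one coefficient blows up super-exponentially, and this blow-up can only be cancelled by the numerator product for $\rho$ in a set of Hausdorff dimension zero. The execution, however, differs genuinely in two places. First, you work with the slice $n=0$ (coefficients $a_{m,0}$, resp.\ $b_{m,1}$, denominators $\sin(\pi j/\alpha)$, inflated at $j=p_{n_k}$ via $\|p_{n_k}/\alpha\|$), whereas the paper uses the slice $m=0$ (coefficients $a_{0,n}$, denominators $\sin(\pi\alpha j)$, inflated at $j=q_{n_k}$ via $\|\alpha q_{n_k}\|$); the two are interchangeable since $\alpha\in\tl$ iff $\alpha^{-1}\in\tl$ (Proposition~\ref{prop_set_L}(ii)). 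Second, and more substantively, the paper defines ${\mathcal B}(\alpha)$ by the subsequence-independent condition $\|\alpha\rho+n\alpha\|<n^{-\ln\ln(1+n)}$ infinitely often and obtains dimension zero by invoking Bugeaud's theorem on the sets ${\mathcal V}_v(\alpha)$, while you define ${\mathcal B}(\alpha)$ directly through smallness of the numerator product $N_{p_{n_k}}(\rho)$ along your subsequence and prove the dimension bound by hand: the geometric-mean pigeonhole reduces smallness of the product to smallness of a single $\|\rho+(j-1)/\alpha\|$, and an interval covering with radii decaying faster than any power of $q_{n_k}^{-1}$ annihilates every $s$-dimensional Hausdorff measure. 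Your route is thus self-contained where the paper leans on a nontrivial external result, and it yields a slightly stronger conclusion ($\limsup_m |c_m|^{1/m}=\infty$, i.e.\ zero radius of convergence, versus the paper's ``infinitely many terms exceed $1$''); the price is that your exceptional set depends on the chosen subsequence and lacks the paper's clean arithmetic description. Two harmless slips to fix in a final write-up: for $\alpha\in(1,2)$ one has $p_{n_k}>q_{n_k}$, so the covering estimate should use $p_{n_k}\le 2\alpha q_{n_k}$ (the sum $\sum_{k\ge K} p_{n_k} (2r_k)^s$ still vanishes because $q_{n_k}$ grows at least geometrically in $k$), and each set $\{\rho\in(0,1):\|\rho+(j-1)/\alpha\|<r_k\}$ may consist of two intervals, costing an irrelevant factor of $2$.
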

\begin{proof}
Assume that $\alpha \in (1,2)\cup \tl$. According to Definition \ref{def_set_tl} there exist $b>1$ and $\epsilon>0$ such that 
\beq\label{est0}
\| \alpha q \|< q b^{-q \ln(q)^{1+\epsilon}}, \;\; {\textnormal{ for all }} q \in {\mathcal Q}, 
\eeq
where ${\mathcal Q}$ is an infinite subset of $\n$.  
 We define the following set of real numbers 
\beq\label{def_B_alpha}
{\mathcal B}(\alpha):=  \{\rho \in (0,1) \; : \; 
 \| \alpha \rho + n \alpha\|<n^{- \ln \ln (1+n)} {\textnormal{ for infinitely many }} \; n\in \n \} .
\eeq
From the definition \eqref{def_C_alpha} of the set ${\mathcal C}(\alpha)$ and property 
\eqref{est0} it is clear that ${\mathcal C}(\alpha)\subseteq {\mathcal B}(\alpha)$, in particular ${\mathcal B}(\alpha)$ is non-empty. Our first goal is to prove that the set ${\mathcal B}(\alpha)$ is in fact quite small: the Hausdorff dimension of this set is zero.

For $v>0$ let us define 
\beqq
{\mathcal V}_v(\alpha):=\{x \in \r \; : \; \| x + n \alpha\|<n^{- v} {\textnormal{ for infinitely many }} \; n\in \n \}. 
\eeqq 
As was proved by Bugeaud (see Theorem 1 in \cite{Bugeaud2003}), for any irrational $\alpha$ and any $v>1$ the Hausdorff dimension of the set 
${\mathcal V}_v(\alpha)$ is $1/v$. From \eqref{def_B_alpha} we find that for any $v>0$
\beqq
\{\alpha x \; : \; x\in {\mathcal B}(\alpha)\} \subset {\mathcal V}_v(\alpha),
\eeqq
and since $v$ can be taken arbitrary large we see that the Hausdorff dimension of ${\mathcal B}(\alpha)$ must be zero.  

Next, we will show that for any $\rho \notin {\mathcal B}(\alpha)$ than for any $x>0$ we have
\beq
|a_{0,q} x^{q}|>1 \;\;\; \textnormal{ for  all large enough $q\in {\mathcal Q}$,}
\eeq
which of course implies that the series \eqref{eqn_p2} can not converge absolutely. 

First of all, from \eqref{def_a_mn} we find that for $n\ge 1$
\beq\label{estimate_a0n_N1}
|a_{0,n}|&=&\frac{\sin(\pi \rho)}{\pi} \frac{\Gamma(\rho+n)}{\Gamma(\alpha \rho + \alpha n)}
\prod\limits_{j=1}^{n} |\sin(\pi \alpha (\rho+j-1))|
\prod\limits_{i=1}^{n} \frac{1}
{|\sin(\pi \alpha i)|}\\ \nonumber
&>& \frac{\sin(\pi \rho)}{\pi} \frac{1}{(2n)!} \left(\prod\limits_{j=1}^{n} \| \alpha (\rho+j-1))\|\right)
\frac{1}{\pi \| \alpha n \|},
\eeq
where the first equality is obtained by applying the reflection formula for the Gamma function, 
and the inequality in the second line
is derived using the following trivial estimates: 
\beqq
&&\Gamma(\rho + n)>\Gamma(1)=1, \\
&&\Gamma(\alpha\rho+\alpha n)<\Gamma(1+2n)=(2n)!, \\ 
&&|\sin(\pi z)|\ge\|z\|, \; |\sin(\pi z)|\le 1  \; \textnormal{ and } \; |\sin(\pi z)|\le \pi \| z\|, \;\;\; z\in \r. 
\eeqq 

Next, since $\rho \notin {\mathcal B}(\alpha)$ we know that $\rho \notin {\mathcal C}(\alpha)$, in 
particular $\|\alpha \rho\|>0$. At the same time, since $\rho \notin {\mathcal B}(\alpha)$ there exists 
a constant $C=C(\alpha,\rho)>0$ such that for all $j\ge 1$
\beq\label{est1}
\|\alpha \rho + j \alpha \|>C j^{-\ln \ln (1+j)}. 
\eeq
Using \eqref{est1} 
and the estimate
\beqq
\sum\limits_{j=1}^{n-1} \ln(j) \ln \ln (1+j) <n \ln (n) \ln\ln(1+n)
\eeqq
we find
\beq\label{est3}
\prod\limits_{j=1}^{n} \| \alpha (\rho+j-1))\|&=&
\|\alpha \rho \| \prod\limits_{j=1}^{n-1} \| \alpha \rho+ \alpha j\| \\ \nonumber
&>& \|\alpha \rho \| C^{n-1} \exp\left( - \sum\limits_{j=1}^{n-1} \ln(j) \ln \ln (1+j) \right)
> \|\alpha \rho\| C^{n-1} e^{-n \ln(n) \ln\ln(1+n)}.
\eeq
Next, we take $q\in {\mathcal Q}$, combine  \eqref{est0}, \eqref{estimate_a0n_N1}, \eqref{est3} with the trivial estimate $(2n)!<(2n)^{2n}$ and find that for every $x>0$ 
\beqq
|a_{0,q}x^q|&>&\frac{\sin(\pi \rho)\|\alpha \rho\|}{\pi^2} C^{q-1} x^q e^{-2q \ln(2q)-q \ln(q) \ln \ln(1+q)} \frac{1}{\|\alpha q\|}\\ 
&>&
\frac{\sin(\pi \rho)\|\alpha \rho\|}{\pi^2} C^{q-1} x^q e^{-2q \ln(2q)-q \ln(q) \ln \ln(1+q)}  b^{q\ln(q)^{1+\epsilon}} q^{-1}.  
\eeqq
The right-hand side of the above inequality can become arbitrarily large if $q$ is large enough. 
Given that ${\mathcal Q}$ is an infinite set we conclude that that for any $x>0$ it is true that $|a_{0,n}x^n|>1$ for infinitely many $n$, which shows that the series  \eqref{eqn_p2} does not converge absolutely. 

The proof in the case $\alpha \in (0,1)$ is very similar, the details are left to the reader.
\end{proof}


\section{Mellin transform of $S_1$ when $\alpha$ is rational}\label{section_Mellin}


The fact that the Mellin transform is a powerful tool for studying stable processes was known already in 1950s  (see Zolotarev \cite{Zolotarev1957} and Darling \cite{Darling1956}). This is not a very surprising fact, given that 
the scaling property for stable processes is multiplicative in nature. We will use the following notation for the Mellin transform of $S_1$
\beq\label{def_mms}
\mm(w)=\mm(w;\alpha,\rho):=\e \left[ \left(S_1\right)^{w-1} \right]=\int\limits_{0}^{\infty} p(x;\alpha,\rho) x^{w-1} \d x. 
\eeq
The analytical structure of this function was completely described in \cite{Kuz2010}. In particular, 
for all admissible parameters $(\alpha,\rho)$ there exists an explicit expression
\beq\label{formula_Ms_general}
\mm(s)=\alpha^{s-1} \frac{G(\alpha\rho;\alpha)}{G(\alpha(1-\rho)+1;\alpha)}
\times \frac{G(\alpha(1-\rho)+2-s;\alpha)}
{G(\alpha\rho-1+s;\alpha)}
\times \frac{G(\alpha-1+s;\alpha)}{G(\alpha+1-s;\alpha)}.
\eeq
Here $G(z;\tau)$ is the so-called double gamma function, which was studied by Barnes in \cite{Barnes1899}, \cite{Barnes1901}. This function is defined for $|\arg(\tau)|<\pi$, and all $z\in \c$ by a Weierstrass product 
\beqq
G(z;\tau):=\frac{z}{\tau} e^{a\frac{z}{\tau}+b\frac{z^2}{2\tau}} \prod\limits_{m\ge 0} \prod\limits_{\substack{n\ge 0 \\ m+n>0 }} 
\left(1+\frac{z}{m\tau+n} \right)e^{-\frac{z}{m\tau+n}+\frac{z^2}{2(m\tau+n)^2}}.
\eeqq
In particular we see that $G(z;\tau)$ is an entire function of $z$, which has zeros at points $-m\tau - n$, $n,m\ge 0$. 
Barnes \cite{Barnes1899} has proved that it is possible to choose the coefficients $a=a(\tau)$ and $b=b(\tau)$ so that  $G(1;\tau)=1$ and $G(z;\tau)$ satisfies the two functional equations
\beq\label{funct_rel_G}
G(z+1;\tau)=\Gamma\left(\frac{z}{\tau}\right) G(z;\tau), \;\;\; G(z+\tau;\tau)=(2\pi)^{\frac{\tau-1}2}\tau^{-z+\frac12}
\Gamma(z) G(z;\tau).
\eeq

 Formula \eqref{formula_Ms_general} can be considerably simplified when $X\in {\mathcal C}_{k,l}$, in this case $\mm(s)$ 
 can be given in terms of Gamma function and trigonometric functions (see formula (6.10) in \cite{Kuz2010}). As we will see in this section, formula \eqref{formula_Ms_general} can also be simplified in the case when 
$\alpha$ is rational.  In order to state our results, first we need to present several definitions.

Everywhere in this paper we will work with the principal branch of the logarithm, which is defined in the domain $|\arg(z)|<\pi$ 
by requiring that $\ln(1)=0$. Similarly, the power function will be defined as $z^a=\exp(a\ln(z))$ in the domain $|\arg(z)|<\pi$. 
The principal branch of the dilogarithm function $\Li_2(z)$ is defined in the domain $z \in \c \setminus [1,\infty)$ by the integral representation
\beq\label{def_Li2}
\Li_2(z):=-\int\limits_0^z \frac{\ln(1-u)}{u} \d u,
\eeq
where we integrate along any path from $0$ to $z$ which lies in $\c \setminus [1,\infty)$
(see \cite{Lewin} and \cite{Maximon2003}). By expanding the logarithm into Taylor series it is easy to see that in the open disk $\{z \in \c \; : \; |z|<1\}$ the dilogarithm is given by an absolutely convergent series
\beq\label{Li_series1}
\Li_2(z)=\sum\limits_{k\ge 1} \frac{z^2}{k^2}.
\eeq
While dilogarithm is not one of the elementary functions, it is a well-understood function which satisfies many functional identities. As we will see later in Section \ref{section_numerics}, the dilogarithm can be easily computed to high precision in its entire domain of definition $\c \setminus [1,\infty)$.

For $a, q  \in \c$ such that $|a|<1$ and $|q|\le 1$  we define the modified q-Pochhammer symbol
\beq\label{def_gen_q_pochhammer}
[a;q]_n:=\prod\limits_{k=1}^{n-1} (1-aq^k)^\frac{k}{n},
\eeq
and for any co-prime positive integers $m$ and $n$ we define 
\beq\label{def_Hmn}
H_{m,n}(s):=\exp\left(-\frac{1}{2 \pi \i mn} {\textnormal{Li}}_2\left(e^{2\pi \i s}\right)\right)
\times \frac{\left(1-e^{2\pi \i s} \right)^{1-\frac{s}{mn}}}
{\left[ e^{\frac{2\pi \i s}{m}};e^{\frac{2\pi \i n}{m}} \right]_m 
\left[ e^{\frac{2\pi \i s}{n}};e^{\frac{2\pi \i m}{n}} \right]_n}.
\eeq
Note that $H_{m,n}(s)$ is well-defined and is an analytic function in the half-plane $\im(s)>0$. The following Theorem is our main result in this Section. 
\begin{theorem}\label{thm_Mellin_rational} Assume that $\alpha=m/n$ where $m$ and $n$ are co-prime natural numbers. Then
 for all $s\in \c$ with $\im(s)>0$ we have
\beq\label{eqn_ms_main}
\mm(s)=\sqrt{\frac{n}{m}}e^{\frac{\pi \i }{12mn}(m^2+n^2-3mn)+ \pi \i \left( \frac{n}{m}-\rho \right) (s-1) }
\frac{\Gamma(s)}{\Gamma\left(1-\frac{n}{m}(1-s)\right)}
\frac{H_{m,n}(m\rho)H_{m,n}(ns)}{H_{m,n}(n(s-1)+m\rho)}. 
\eeq
\end{theorem}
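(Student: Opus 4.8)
The plan is to start from the closed form \eqref{formula_Ms_general} for $\mm(s)$ in terms of the double gamma function $G(\cdot;\alpha)$ and to reduce every $G$-factor with $\tau=\alpha=m/n$ to elementary objects. The $s$-independent factors $G(\alpha\rho;\alpha)/G(\alpha(1-\rho)+1;\alpha)$ only contribute to the overall constant, so the core of the problem is to evaluate the four $s$-dependent factors $G(\alpha(1-\rho)+2-s;\alpha)$, $G(\alpha\rho-1+s;\alpha)$, $G(\alpha-1+s;\alpha)$ and $G(\alpha+1-s;\alpha)$. The key step is a lemma giving an explicit formula, valid in a horizontal strip and then continued to $\im(s)>0$, expressing $\ln G(z;m/n)$ (equivalently, the logarithm of the relevant ratios of double gamma functions) as a finite sum of ordinary $\ln\Gamma$ terms, a dilogarithm term, and a quadratic-plus-linear polynomial in the argument. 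The mechanism that produces exactly the ingredients appearing in $H_{m,n}$ is a residue/partial-fraction analysis, described next.

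I would work from a Binet--Malmsten-type integral representation for $\ln G$ whose kernel is $[(1-e^{-t})(1-e^{-\tau t})]^{-1}$, regularized by subtracting its $t^{-2}$ and $t^{-1}$ parts at the origin. Setting $\tau=m/n$ and substituting $t\mapsto nt$ turns the kernel into $[(1-e^{-nt})(1-e^{-mt})]^{-1}$, a meromorphic function of $t$ whose poles lie on the lattice $2\pi\i\,(\tfrac1n\zz\cup\tfrac1m\zz)$. Because $\gcd(m,n)=1$, the two root-of-unity families overlap only at $t\in 2\pi\i\,\zz$, where the kernel has double poles, while at the remaining points $t=2\pi\i k/m$ and $t=2\pi\i j/n$ it has simple poles. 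Summing the residue contributions of the simple poles against $e^{-zt}$ produces precisely the two finite products $[e^{2\pi\i s/m};e^{2\pi\i n/m}]_m$ and $[e^{2\pi\i s/n};e^{2\pi\i m/n}]_n$ from \eqref{def_gen_q_pochhammer}, with the shifted bases $e^{2\pi\i n/m}$ and $e^{2\pi\i m/n}$ arising because $e^{-nt}$ and $e^{-mt}$ are evaluated at those poles. The double poles at $2\pi\i\,\zz$ split into two pieces: the pole at the origin contributes the leading ordinary Gamma factor $\Gamma(s)/\Gamma(1-\tfrac nm(1-s))$ together with the polynomial corrections, while summing $\ell^{-2}e^{2\pi\i\ell s}$ over $\ell\neq0$ yields the dilogarithm $\Li_2(e^{2\pi\i s})$ with the coefficient $-(2\pi\i mn)^{-1}$, exactly as in \eqref{def_Hmn}. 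Reassembling these pieces gives $H_{m,n}$, and the restriction $\im(s)>0$ is what guarantees $|e^{2\pi\i s}|<1$, so that the dilogarithm series and the modified q-Pochhammer symbols are well defined and single-valued.

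The final step is the accounting of the constant prefactor. The regularized $t^{-2}$ and $t^{-1}$ parts of the kernel near the origin produce Bernoulli-type constants; combined with the explicit $(2\pi)^{(\tau-1)/2}\tau^{-z+1/2}$ factors dictated by the second relation in \eqref{funct_rel_G} and with the $s$-independent ratio $G(\alpha\rho;\alpha)/G(\alpha(1-\rho)+1;\alpha)$, these must collect into the factor $\sqrt{n/m}\,e^{\frac{\pi\i}{12mn}(m^2+n^2-3mn)}$ and the linear exponential $e^{\pi\i(\frac nm-\rho)(s-1)}$. As an independent check, and as a way to fix any constant left undetermined, I would verify using \eqref{funct_rel_G} that both sides of \eqref{eqn_ms_main} satisfy the same first-order difference equation under $s\mapsto s+1$ and under $s\mapsto s+m/n$ --- where the dilogarithm argument $e^{2\pi\i ns}$ is invariant and the q-Pochhammer blocks telescope into ordinary Gamma ratios --- and I would pin down the overall constant by matching the known special case $\rho\in\cc(\alpha)$ from \cite{Kuz2010} or the normalization $\mm(1)=\e[(S_1)^0]=1$. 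I expect the main obstacle to be precisely this constant: the factor $e^{\frac{\pi\i}{12mn}(m^2+n^2-3mn)}$ encodes the regularized dilogarithm/Bernoulli constant of the double gamma function at a rational parameter, and tracking it through the residue sum together with the fractional exponents $k/n$ in \eqref{def_gen_q_pochhammer} and the correct branch choices for the powers and logarithms is by far the most delicate part of the computation; the residue bookkeeping that assembles the q-Pochhammer symbols is routine by comparison once the pole structure is set up.
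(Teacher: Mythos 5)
Your route is genuinely different from the paper's, and it is viable in principle. The paper's proof applies the functional equation \eqref{funct_rel_G} exactly once, to replace $G(\alpha-1+s;\alpha)$ by $G(s;\alpha)$ --- this is the step that creates the factor $\Gamma(s)/\Gamma\left(1-\frac{n}{m}(1-s)\right)$ and makes all remaining $G$-factors pair into reflection ratios whose arguments sum to $\alpha+1$ --- and then invokes two classical results as black boxes: Barnes' rational-parameter formula \eqref{G_finite_product}, which writes $G(z;m/n)$ as a finite product of ordinary Barnes G-functions, and the reflection formula \eqref{reflection_formula}, which is where the dilogarithm enters; the resulting double products are collapsed by the root-of-unity identities \eqref{eqn_id_1}. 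Your contour-rotation/residue analysis of a Malmsten-type integral would derive the combined content of \eqref{G_finite_product} and \eqref{reflection_formula} in one stroke, and your reading of the pole structure is essentially right: the simple poles generate the q-Pochhammer blocks, and the double poles at $2\pi\i\,\zz$ generate the dilogarithm with coefficient proportional to $1/(mn)$ together with the $s$-dependent exponent $1-\frac{s}{mn}$. What your approach buys is self-containedness (no appeal to \cite{Barnes1901} or \cite{Voros}); what it costs is re-deriving classical special-function identities, which is where all the labor sits.

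Three concrete points need repair before this becomes a proof. First, your key lemma is false as first stated: a single $G(z;m/n)$ cannot be written in terms of Gamma functions, a dilogarithm and a polynomial (Barnes' formula \eqref{G_finite_product} reduces it only to Barnes G-functions, which are not of that form); only reflection ratios $G(z;\alpha)/G(1+\alpha-z;\alpha)$ reduce, because only for such ratios do the leftover integrals cancel after rotating the contour. You hedge this parenthetically, but the hedge matters structurally: in \eqref{formula_Ms_general} the pair $G(\alpha(1-\rho)+2-s;\alpha)/G(\alpha\rho-1+s;\alpha)$ has arguments summing to $\alpha+1$, while $G(\alpha-1+s;\alpha)/G(\alpha+1-s;\alpha)$ has arguments summing to $2\alpha$ --- a mismatch. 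The Gamma ratio in \eqref{eqn_ms_main} is precisely the price of curing that mismatch via \eqref{funct_rel_G} (equivalently, it is the leftover single-factor Malmsten integral, which is a $\ln\Gamma$); it does \emph{not} come from the pole at the origin, which contributes only the quadratic/linear polynomial and constants. Second, your fallback for fixing the constant by checking the difference equations under $s\mapsto s+1$ and $s\mapsto s+m/n$ fails precisely in the rational case: the two quasi-periods are commensurable, so the ratio of the two sides of \eqref{eqn_ms_main} is only forced to be a $\frac{1}{n}$-periodic analytic function of $s$ (for instance $e^{2\pi\i n s}$ passes this test), not a constant; the normalization $\mm(1)=1$ then pins down one number but not a periodic function. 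So either you control the asymptotics as $\im(s)\to+\infty$ sharply enough to exclude the periodic factor, or you must carry the residue computation through exactly, so that only a genuine $s$-independent constant remains to be fixed. Third, the simple-pole bookkeeping is heavier than ``routine'': summing the residues at $t=2\pi\i k/m$ over all $k$ not divisible by $m$ produces Lerch-type sums, and it is only after evaluating the attendant root-of-unity (cotangent-type) sums that the fractional exponents $k/m$ and $l/n$ in \eqref{def_gen_q_pochhammer} emerge; this is where most of the hidden work in your plan lives.
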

\begin{proof}
We start with the following identity
\beqq
G(\alpha-1+s;\alpha)=(2\pi)^{\frac{\alpha-1}{2}} \alpha^{-s+\frac{1}{2}} \frac{\Gamma(s)}{\Gamma\left(1+\frac{s-1}{\alpha}\right)} G(s;\alpha),
\eeqq
which follows from \eqref{funct_rel_G}.
The above identity and formula \eqref{formula_Ms_general} give us
\beq\label{formula_Ms_general2}
\mm(s)=(2\pi)^{\frac{\alpha-1}{2}} \alpha^{-\frac12} 
\frac{\Gamma(s)}{\Gamma\left(1-\frac{1-s}{\alpha}\right)}
\frac{G(\alpha\rho;\alpha)}{G(\alpha(1-\rho)+1;\alpha)}
 \frac{G(\alpha(1-\rho)+2-s;\alpha)}
{G(\alpha\rho-1+s;\alpha)}
\frac{G(s;\alpha)}{G(\alpha+1-s;\alpha)}.
\eeq
The first main ingredient in our proof is the following formula
\beq\label{G_finite_product}
G\left(z;\frac{m}{n} \right)=(2\pi)^{-(n-1) \frac{z}{2}} n^{\frac{n}{m}\frac{z^2}{2}-(m+n) \frac{z}{2m}+1}
\prod\limits_{\substack{0\le k \le m-1 \\ 0\le l \le n-1}} G\left(\frac{z+k}{m}+\frac{l}{n} \right)
\prod\limits_{\substack{0\le k \le m-1 \\ 0\le l \le n-1 \\ k+l>0}}G\left(\frac{k}{m}+\frac{l}{n} \right)^{-1}
\eeq
where $G(z):=G(z;1)$ is Barnes G-function. The above formula is just a special case of a more general result  connecting $G(z;\frac{m}{n}\tau)$ with $G(z;\tau)$, which was established
by Barnes in \cite{Barnes1901}. In order to obtain the identity \eqref{G_finite_product} one has to set $\tau=w_1=w_2=1$ in the formulas on pages 302 and 359 in \cite{Barnes1901}. Applying 
identity \eqref{G_finite_product} to each double gamma function in \eqref{formula_Ms_general2} and simplifying 
the resulting expression we obtain
\beq\label{eqn_Ms_n3}
\mm(s)=(2\pi)^{\frac{m-n}2} \alpha^{-\frac{1}{2}}  
\frac{\Gamma(s)}{\Gamma\left(1-\frac{1-s}{\alpha}\right)}
\frac{F_{m,n}(m\rho)  F_{m,n}(ns)}{F_{m,n}(n(s-1)+m\rho)},
\eeq
where we have defined
\beq\label{def_Fmn}
F_{m,n}(s):=
\prod\limits_{k=0}^{m-1} \prod\limits_{l=0}^{n-1} 
\frac{G\left(\frac{s}{nm}+\frac{k}{m}+\frac{l}{n}\right)}{G\left(\frac{1+k}{m}+\frac{1+l}{n}-\frac{s}{nm}\right)}
\eeq
We change the indices $k \mapsto m-1-k$ and $l \mapsto n-1-l$ in the denominator in the right-hand side 
of \eqref{def_Fmn} and obtain an equivalent expression
\beq\label{eqn_Fmn_2}
F_{m,n}(s)=
\prod\limits_{k=0}^{m-1} \prod\limits_{l=0}^{n-1} 
\frac{G\left(\frac{s}{nm}+\frac{k}{m}+\frac{l}{n}\right)}{G\left(2-\frac{s}{nm}-\frac{k}{m}-\frac{l}{n}\right)}.
\eeq
The second main ingredient of the proof is the following reflection formula for the Barnes G-function 
\beq\label{reflection_formula}
\frac{G(z)}{G(2-z)}= (2\pi)^{z-1} \left(1-e^{2\pi \i z} \right)^{1-z} 
e^{\frac{\pi \i}2 \left( z^2 -2z+\frac{5}{6} \right)
-\frac{1}{2\pi \i} {\textnormal{Li}}_2(\exp(2\pi \i z)) }, \;\;\; \im(z)>0.  
\eeq
The above formula follows from identity (A.5) in \cite{Voros} and  formulas (4.6), (8.43) in \cite{Lewin}. 
Next, we apply the reflection formula \eqref{reflection_formula} to each term in the product \eqref{eqn_Fmn_2}, we simplify the resulting expression with the help of identities 
\beqq
\sum\limits_{k=1}^{n-1} k =\frac{1}{2} n (n-1), \;\;\;  \sum\limits_{k=1}^{n-1} k^2 =\frac{1}{6} n (n-1) (2n-1),
\eeqq
and we finally obtain
\beq\label{eqn_Fmn3} 
\nonumber
F_{m,n}(s)&=&
(2\pi)^{s-\frac{m+n}2} e^{\frac{\pi \i s}{2mn} (s-m-n)+\frac{\pi \i }{12mn}\left(m^2+n^2+3mn \right)} \\
&\times &
\prod\limits_{k=0}^{m-1} \prod\limits_{l=0}^{n-1} 
\left( 1-\exp\left(2\pi \i \left(\frac{s}{mn}+\frac{k}{m}+\frac{l}{n} \right)\right) \right)^{1-\frac{s}{mn}+\frac{k}{m}+\frac{l}{n}} \\ \nonumber
&\times& \prod\limits_{k=0}^{m-1} \prod\limits_{l=0}^{n-1}  e^{-\frac{1}{2\pi \i} \Li_2\left(\exp\left(2\pi \i \left(\frac{s}{mn}+\frac{k}{m}+\frac{l}{n} \right)\right) \right) }.
\eeq
Formula \eqref{eqn_Fmn3} can be further simplfied with the help of the following two identities, which are valid for any co-prime positive integers $m$ and $n$:
\beq\label{eqn_id_1}
\prod\limits_{k=0}^{n-1} \left(1-z e^{\frac{2\pi \i k m}{n}} \right)=1-z^n, 
\;\;\;\;\; \sum\limits_{k=0}^{n-1} \Li_2\left(ze^{\frac{2\pi \i k m}{n}} \right) = \frac{1}{n} \Li_2(z^n), \;\;\; |z|<1.
\eeq
The first identity can be easily verified if one considers the left-hand side as a product involving $n$ roots of the polynomial $1-z^n$. For the second identity see (3.12) in \cite{Maximon2003}. 

Using \eqref{eqn_Fmn3}, \eqref{def_gen_q_pochhammer} and  \eqref{eqn_id_1} we can express the function $F_{m,n}(s)$ as follows
\beq\label{eqn_Fmn4} 
F_{m,n}(s)=(2\pi)^{s-\frac{m+n}2} e^{\frac{\pi \i s}{2mn} (s-m-n)+\frac{\pi \i }{12mn}\left(m^2+n^2+3mn \right)
-\frac{1}{2 \pi \i mn} {\textnormal{Li}}_2(\exp(2\pi \i s))}
\times \frac{\left(1-e^{2\pi \i s} \right)^{1-\frac{s}{mn}}}
{\left[ e^{\frac{2\pi \i s}{m}};e^{\frac{2\pi \i n}{m}} \right]_m 
\left[ e^{\frac{2\pi \i s}{n}};e^{\frac{2\pi \i m}{n}} \right]_n}.
\eeq
Formula \eqref{eqn_ms_main} now follows from \eqref{def_Hmn} and \eqref{eqn_Ms_n3}, \eqref{eqn_Fmn4}. 
\end{proof}

Theorem \ref{thm_Mellin_rational} should be compared with Theorem 2 in \cite{Kuz2010}, which gives an 
explicit expression for the Wiener-Hopf factor $\phi(z)$ in terms of the Clausen function, defined
for real values of $x$ as 
$\textnormal{Cl}_2(x):=\im \left[ \Li_2(\exp(\i x) \right]$ (see chapter 4 in \cite{Lewin}). This shows that
when $\alpha$ is rational both the Wiener-Hopf factor
$\phi(z)$ and the Mellin transform $\mm(s)$ can be expressed in terms of elementary functions, Gamma function and dilogarithm. 
It would be an interesting exercise to obtain expression  \eqref{eqn_ms_main} for the Mellin transform directly from Theorem 2 in \cite{Kuz2010} using identity \eqref{piece1}, or, alternatively, using the approach based on exponential functionals and Lamperti's transformation  (see \cite{KUPA11}).


\section{Numerical experiments}\label{section_numerics}


In this section we study the problem of numerically computing the density of the supremum $S_1$. 
There are two different approaches to this problem depending on whether $\alpha$ is rational or not. In the case when 
$\alpha$ is rational we can compute $p(x)$ via the inverse Mellin transform of $\mm(s)$ given by \eqref{eqn_ms_main}. 
If $\alpha$ is irrational, the Mellin transform $\mm(s)$ given by
the general formula \eqref{eqn_ms_main} is rather hard to evaluate numerically, thus we will try to compute $p(x)$ using the series expansions  \eqref{eqn_p1} or \eqref{eqn_p2}. However, apriori it is not clear 
whether these expansions can possibly lead to any meaningful numerical results. According to Theorem \ref{thm_main} we need to ensure that $\alpha \notin {\mathcal L} \cup \q$, but in any computer program
 $\alpha$ would be given to a finite precision. Therefore,  we will be working with a rational approximation to $\alpha$, and we know that
 for rational values of $\alpha$ the coefficients of the series \eqref{eqn_p1} and \eqref{eqn_p2} are not even well-defined. 
 As we will see in this Section, this method based on series expansions can still be used for computing $p(x)$, though it certainly has its limitations.


\subsection{Computing $p(x)$ by inverting the Mellin transform for rational $\alpha$}\label{subsection_numerics_Mellin}


In the method based on inverting the Mellin transform we face the following two problems. First, we need to be able to evaluate the Mellin transform $\mm(s)$ given by \eqref{eqn_ms_main} efficiently and with high  precision.  Second,  we have to compute the inverse Mellin transform numerically. 

Let us describe how one can compute the Mellin transform $\mm(s)$. 
We see that the only challenging part in \eqref{eqn_ms_main} is computing the dilogarithm function $\Li_2(z)$ in the unit disk 
${\mathbb D}:=\{z \in \c \; : \; |z|\le 1 \}$. 
Let us denote
\beqq
D_1:={\mathbb D} \cap \{z \in \c \; : \; |1-z|<1/10\}.
\eeqq

In the domain $D_1$ the function $\Li_2(z)$ can be computed using the identity (3.2) in \cite{Maximon2003}
\beq\label{Li2_series2}
\Li_2(z)=-\Li_2(1-z) + \frac{\pi^2}6 - \ln(1-z) \ln(z).
\eeq
The term $\Li_2(1-z)$ in the right-hand side of \eqref{Li2_series2} is evaluated using the series expansion \eqref{Li_series1}; since $|1-z|<1/10$ in $D_1$ this series converges very fast. 

In the domain ${\mathbb D} \setminus D_1$ we will use the following formula
\beq\label{Li2_series3}
\Li_2(z)=-3w-\frac{w^2}{4}+
2\pi \i \ln\left( \frac{2\pi \i+w}{2\pi \i-w} \right)+2w \sum\limits_{n\ge 1} (-1)^n \frac{\zeta(2n)-1}{2n+1} 
\left(\frac{w}{2\pi} \right)^{2n}, \;\;\; w=\ln(1-z),
\eeq
where $\zeta(2n)-1=2^{-2n}+3^{-2n}+\dots$. 
Formula \eqref{Li2_series3} follows by combining (4.3) in \cite{Maximon2003} and the Taylor series of  $\ln((2\pi \i+w)/(2\pi \i-w))$.  Let us investigate the convergence rate of \eqref{Li2_series3}. First of all, one can check that for all $n\ge 1$ we have
\beqq
2^{2n}(\zeta(2n)-1)<4(\zeta(2)-1)<3.
\eeqq
Next, for all $z \in {\mathbb D} \setminus D_1$ we have $1/10 \le |1-z|\le 2$ and $|\arg(1-z)|< \pi/2$, which implies that 
\beqq
|w|^2=|\ln(1-z)|^2=\ln |1-z|^2 + (\arg(1-z))^2<\ln(10)^2+\pi^2/4.
\eeqq
Therefore, the $n$-th term in \eqref{Li2_series3} can be bounded from above as follows
\beqq
 \frac{\zeta(2n)-1}{2n+1} 
\left(\frac{|w|}{2\pi} \right)^{2n}
&<& 3 \times  2^{-2n}\left(\frac{|w|}{2\pi}\right)^{2n}<
3 \times\left(\frac{\ln(10)^2+\pi^2/4}{16\pi^2}\right)^n
\approx  3 \times(0.04919...)^n < \frac{3}{20^n}.
\eeqq
This shows that the series \eqref{Li2_series3} also converges very fast.

\begin{figure}
\centering
\subfloat[][Real (black) and imaginary (blue)  parts of $\mm(1+\i u)$]
{\label{fig2a}\includegraphics[height =5.5cm]{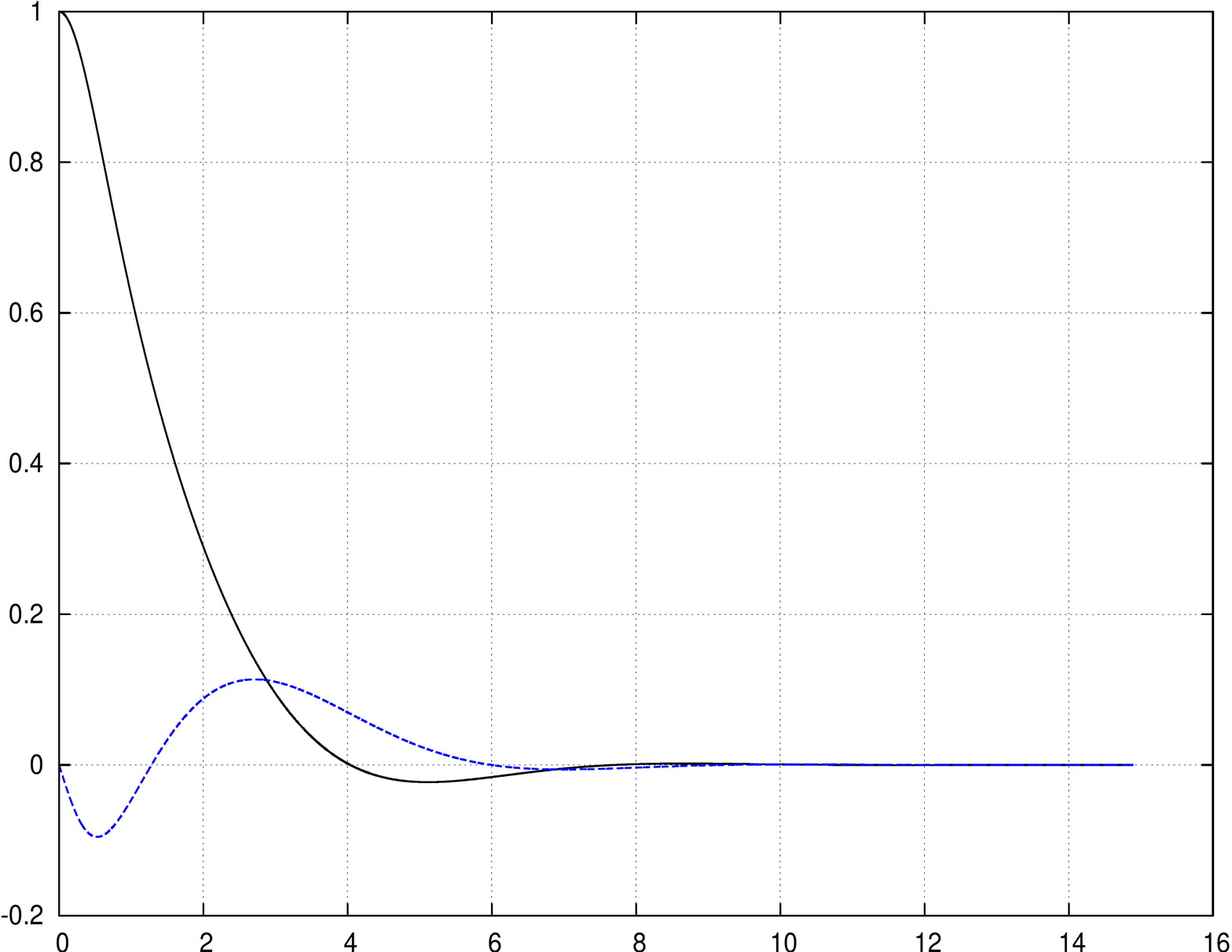}}  
\hspace{0.5cm}
\subfloat[][The density of $S_1$]
{\label{fig2b}\includegraphics[height =5.5cm]{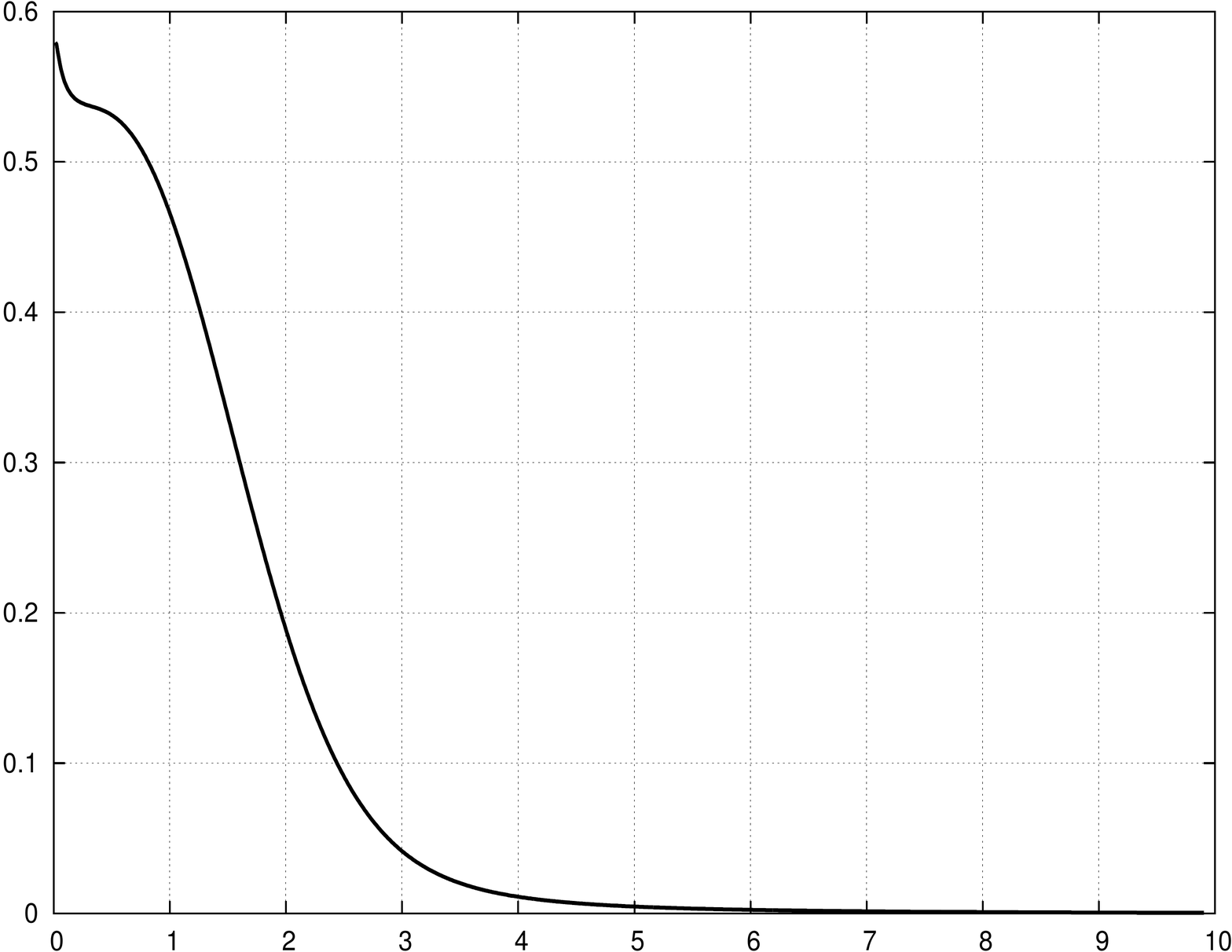}}
\caption{The Mellin transform and the density of the supremum in the case $\alpha=3/2$ and $\rho=3/5$.} 
\label{fig2}
\end{figure}

Once we are able to evaluate the Mellin transform \eqref{eqn_ms_main} numerically, it is a simple matter to find $p(x)$ by computing the inverse Mellin transform.  
Using the fact $\overline{\mm(s)}=\mm(\overline{s})$ we rewrite the inverse Mellin transform as
\beq\label{p_x_inverse_Mellin}
p(x)=\frac{1}{2\pi \i} \int\limits_{1+\i \r} \mm(s) x^{-s} \d s=\frac{1}{\pi x} \re 
\left[ \int_0^{\infty}  \mm(1+\i u) e^{-\i u \ln(x)} \d u \right].  
\eeq 
Thus we need to compute the Fourier transform of $\mm(1+\i u)$. Note that the function $\mm(1+\i u)$ is smooth, 
and as we know from Lemma 3 in \cite{Kuz2010}, it decays exponentially fast as $u\to +\infty$, thus we can truncate the integral in \eqref{p_x_inverse_Mellin} at some large number (we truncate at $u=40$) and then use Filon's method \cite{Filon} to compute this Fourier integral numerically.

The results of the computations for $\alpha=3/2$ and $\rho=3/5$ are presented on Figure \ref{fig2}. Figure \ref{fig2a} shows the real and imaginary parts of the
 Mellin transform $\mm(1+\i u)$, while on Figure \ref{fig2b} we plot the density $p(x)$. One interesting fact that 
we can observe from this picture is that the density of $S_1$ is not convex. 
This fact should be compared with the general result of Rogers \cite{Rogers01071983}, which states that if a L\'evy process 
$X$ has a completely monotone L\'evy density (such as $\Pi(\d x)$ in \eqref{def_Levy_measure}) then for any $q>0$
the density of $S_{\ee(q)}$ is also completely monotone, in particular it is convex.


\subsection{Computing $p(x)$ using the series expansions}\label{subsection_numerics_series}


In this section we investigate the second possible approach towards computing $p(x)$ - via the series expansions 
\eqref{eqn_p1}, \eqref{eqn_p2}. The coefficients of these series are not defined for $\alpha=3/2$, thus 
we perturb $\alpha=3/2$ by a small irrational number. We will first set the parameter values as
$\alpha=3/2+\sqrt{2}/50$ and $\rho=3/5$. 

The implementation of \eqref{eqn_p1}, \eqref{eqn_p2} is rather straightforward. We truncate the convergent series
\eqref{eqn_p2} at $m=n=200$ and the asymptotic series \eqref{eqn_p1} at $m=n=15$. All computations are performed in Fortran90, using quad 128-bit format, which gives us working precision of approximately 34 decimal digits.

\begin{figure}
\centering
\subfloat[]{\label{fig3a}\includegraphics[height =5.25cm]{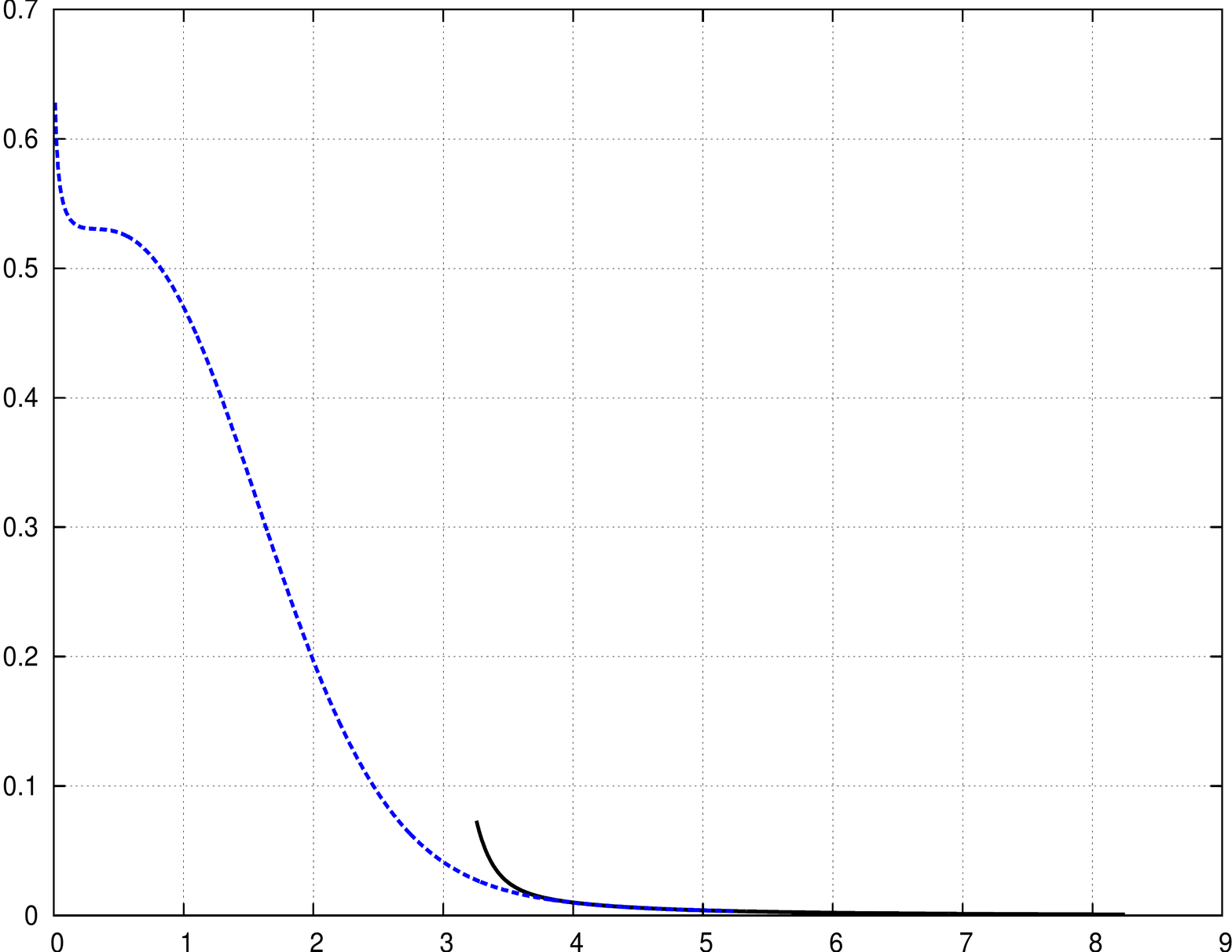}}
\hspace{0.5cm}
\subfloat[]{\label{fig3b}\includegraphics[height =5.25cm]{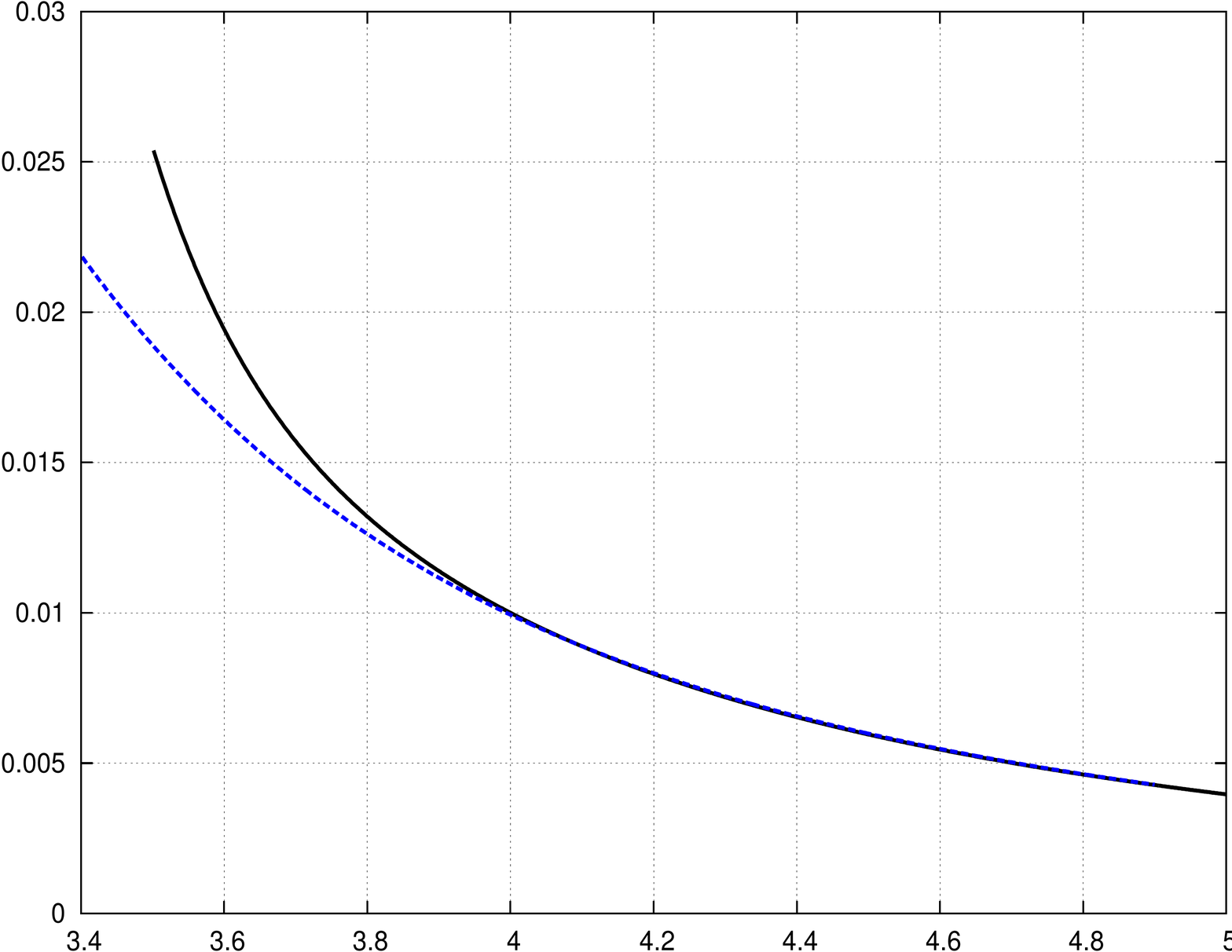}}  
\caption{Computing $p(x;\alpha,\rho)$ via convergent series \eqref{eqn_p2} (blue) and asymptotic series 
 \eqref{eqn_p1} (black). The parameters are $\alpha=3/2+\sqrt{2}/50$ and $\rho=3/5$.} 
\label{fig3}
\end{figure}

We see that the results presented on Figure \ref{fig3} are rather encouraging. First of all, the graph of $p(x)$ on Figure 
\ref{fig3a} is very similar to the one on Figure \ref{fig2b}. Second, as we see on Figure \ref{fig3b}, in the interval $x\in (4,5)$ the convergent series agrees very well with the asymptotic series.

In order to give more credibility to this method we have performed one additional experiment. 
Using the same approach as above, we have computed $p(x;\alpha,\rho)$ for $\alpha=3/2\pm \sqrt{2}/50$ and $\rho=3/5$, 
and then we have compared the average
\beqq
\tilde p(x):=\frac12 \left(p(x;3/2+\sqrt{2}/50,3/5)+p(x;3/2-\sqrt{2}/50,3/5)\right)
\eeqq
with $p(x;3/2,3/5)$ which was evaluated using the inverse Mellin transform technique described in the previous section. 
We would expect that $\tilde p(x)$ should be close to $p(x;3/2,3/5)$, since the function $p(x;\alpha,\rho)$ is continuous 
in parameter $\alpha$. This continuity property can be established using the inverse Mellin transform representation \eqref{p_x_inverse_Mellin} and the fact that 
$\mm(s)$ is continuous in $\alpha$ and decays exponentially in $|\im(s)|$ (with more work one can probably prove that 
$p(x;\alpha,\rho)$ is in fact a smooth function of all three variables $(x,\alpha,\rho)$). 
The results of this experiment are presented in Figure \ref{fig4}. We see that the error $\tilde p(x)- p(x;3/2,3/5)$ is very small, of the 
order $10^{-4}$. This confirms that both methods are in fact producing reasonably accurate results.

\begin{figure}
\centering
\subfloat[][$p(x;\alpha,\rho)$ for $\alpha=3/2+\sqrt{2}/50$ (blue),  \\ $\alpha=3/2-\sqrt{2}/50$ (red) and $\alpha=3/2$ (black) ]
{\label{fig4a}\includegraphics[height =5.25cm]{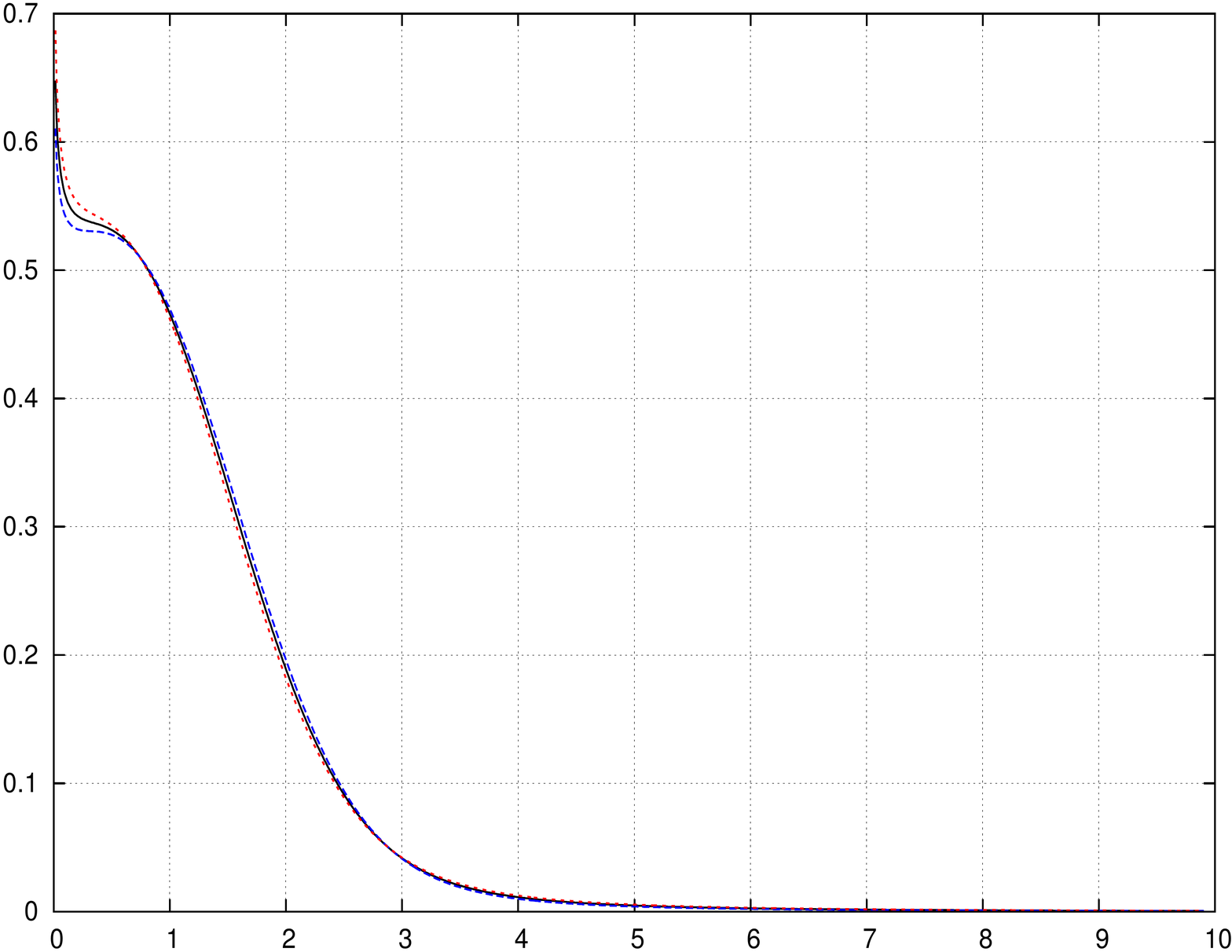}} 
\hspace{0.5cm}
\subfloat[][$\tilde p(x)-p(x;3/2,3/5)$]
{\label{fig4b}\includegraphics[height =5.25cm]{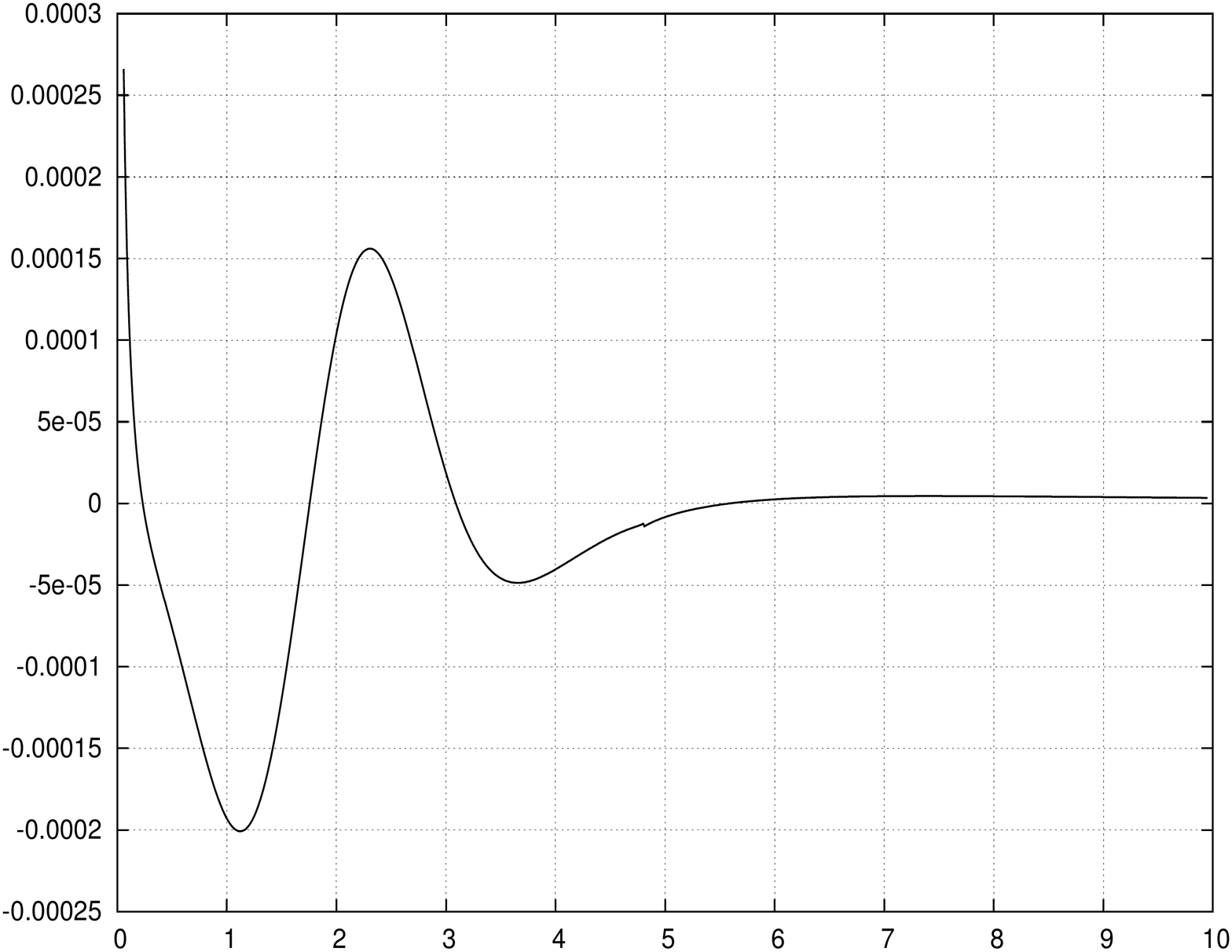}}  
\caption{Computing $p(x;\alpha,\rho)$. Here 
$\alpha \in \{3/2, 3/2\pm \sqrt{2}/50 \}$ and $\rho=3/5$. } 
\label{fig4}
\end{figure}

We would like to note that the method based on the series expansions \eqref{eqn_p1}, \eqref{eqn_p2} seems to have some serious limitations. 
In particular, we were not able to 
obtain good results in our last experiment when we modified $\alpha=3/2\pm \sqrt{2}/100$. 
It seems that when $\alpha$ is too close to a rational number, we would have to include more terms in the series \eqref{eqn_p1}, \eqref{eqn_p2} and at the same time we would have to increase the working precision.

We would also like to mention that there is another possible approach to computing $p(x)$. 
We assume that $\alpha$ is irrational. In this case the set ${\mathcal C}(\alpha)$ defined by 
\eqref{def_C_alpha} is dense, thus we can find a good approximation to $\rho$ of the form $\{l/\alpha\}$. 
This is a problem from the theory of inhomogeneous Diophantine approximations, and it can be solved efficiently using  Cassels algorithm \cite{Cassels1954}. This procedure would give us parameters $(\alpha,\tilde \rho)$, for which $\tilde \rho$ is close to $\rho$ and 
$\tilde \rho \in {\mathcal C}(\alpha)$. The process $\tilde X$ defined by parameters $(\alpha,\tilde \rho)$ belongs to Doney 
class ${\mathcal C}_{k,l}$ for some integers $k$ and $l$, 
therefore we can compute $p(x;\alpha,\tilde \rho)$ using the simpler series expansions given in Theorem 10 in \cite{Kuz2010}. 
The important fact is that the coefficients of these series expansions involve simple bounded products of trigonometric functions. 
While we did not implement this method, we expect that 
in this case the computation can be performed more efficiently and may not even require high precision arithmetic.


\section{Conclusion and several open problems}\label{section_conclusion}


In conclusion we would like to discuss several connections that the study of the distribution of extrema of stable processes has to other areas of Mathematics and Mathematical Physics. 

First of all, the series expansions of the density of $S_1$ are related to $q$-series (see an excellent book by Gasper and Rahman \cite{Gasper_Rahman}). The $q$-series can be informally defined as hypergeometric series where the terms involving Gamma function 
and factorials are replaced by the q-Pochhammer symbol, defined as
\beqq
(a;q)_n := \prod\limits_{k=0}^{n-1} (1-a q^k).
\eeqq
It is clear that the finite products involving $\sin(\cdot)$ function in the definition \eqref{def_a_mn} of coefficients $a_{m,n}$
can be rewritten in terms of the $q$-Pochhammer symbol, thus the series 
\eqref{eqn_p1}, \eqref{eqn_p2} can be considered as $q$-series. 

Convergence properties of $q$-series when $|q|=1$ is an interesting question that has attracted the interest of many researchers. 
The first results in this area were obtained by Hardy and Littlewood \cite{Hardy_Littlewood}, where they have  investigated 
the series 
\beqq
\sum\limits_{n\ge 0} \frac{z^n}{(q;q)_n}.
\eeqq
Driver et. al.  \cite{Driver1991469} have investigated the convergence properties of the series
\beqq
\sum\limits_{n\ge 0} (a;q)_n z^n,
\eeqq
and Lubinsky \cite{Lubinsky} and later Buslaev \cite{Buslaev} have studied the Rogers-Ramanujan function
\beqq
G_q(z)=1+\sum\limits_{n\ge 1} \frac{q^{n^2} z^n}{(q;q)_n}.
\eeqq
This function was central in Lubinsky's disproof of the Baker-Gammel-Wills conjecture (see \cite{Lubinsky}). 

The convergence properties of $q$-series on the unit circle $|q|=1$ are naturally related to Diophantine approximations and properties of 
continued fraction expansion of $\arg(q)$. As we have seen in the proof of Theorem \ref{main_theorem}, in our case the situation is even 
more delicate and the convergence depends essentially on the properties of both parameters $\alpha$ and $\rho$. 
The essential tool in the proof of Theorem \ref{main_theorem} was the result bu Bugeaud \cite{Bugeaud2003} from the theory of inhomogeneous Diophantine approximations. 

There also exists an intriguing connection between the Wiener-Hopf factorization of stable processes and 
 certain special function called {\it quantum dilogarithm}, which has applications in Quantum Topology and Cluster Algebra Theory. For $b\in \c$ with $\re(b)>0$ the Faddeev's quantum dilogarithm is defined 
in the strip $|\im(z)|<\frac{1}{2} \re(b+b^{-1})$ by
\beqq
\Phi_b(z):=\exp\left( -\frac{1}{4} \int\limits_{\r} 
\frac{e^{-2 \i z x}}{\sinh(xb) \sinh(x/b)} \frac{\d x}{x} \right), 
\eeqq
where the singularity at $x=0$ is circled from above, 
see \cite{Faddeev}, \cite{Kashaev} and the references therein. The function $z \mapsto \Phi_b(z)$ can be analytically continued to a meromorphic function on the entire complex plane.  Combining formulas (4.6) in \cite{Kashaev}  and  (4.2) in \cite{Kuz2010} we obtain a particularly simple 
expression for the positive Wiener-Hopf factor in terms of Faddeev's quantum dilogarithm
\beqq
\phi(\exp(2\pi w);\alpha,\rho)=\frac{\Phi_{\sqrt{\alpha}}\left( \sqrt{\alpha} \left( w - \frac{\i \rho}2 \right) \right) }
{\Phi_{\sqrt{\alpha}}\left( \sqrt{\alpha} \left( w + \frac{\i \rho}2 \right) \right) }.
\eeqq 

\vspace{0.2cm}
 We would like to conclude this paper by stating three open problems. 

\vspace{0.2cm}
\noindent
{\bf Problem 1.}  Doney's ${\mathcal C}_{k,l}$ classes are well-understood from the analytical point of view. 
For example, if we consider the Wiener-Hopf factor $\phi(\exp(w))$ or the Mellin transform $\mm(s)$, it is known that the zeros/poles of 
these functions lie on certain lattices, and when $(\alpha,\rho)$ satisfy \eqref{Ckl} these lattices overlap and almost all zeros/poles are cancelled 
(see Lemma 2 and the discussion after Corollary 2 in \cite{Kuz2010}). This explains why the general formula
for the Wiener-Hopf factor (see (4.11) in \cite{Kuz2010}) or the general formula \eqref{formula_Ms_general} for the Mellin transform  can be simplified when $X\in{\mathcal C}_{k,l}$. 
The spectrally-positive \{ spectrally-negative \} processes correspond to Doney's class ${\mathcal C}_{0,1}$ \{ resp.  ${\mathcal C}_{-1,-1}$ \}, 
thus it seems that the classes ${\mathcal C}_{k,l}$ should be considered as generalizations of spectrally one-sided processes. 
An important problem is to a give probabilistic interpretation of Doney's classes. 
Hopefully, an answer to this question would also provide a probabilistic and more insightful derivation  of the existing results on the Wiener-Hopf factorization, Mellin transform and the density of supremum in the case when $X\in {\mathcal C}_{k,l}$.

\vspace{0.2cm}
\noindent
{\bf Problem 2.} In Theorem \ref{main_theorem} we have established that the double series \eqref{eqn_p1}, \eqref{eqn_p2} do
 not converge absolutely by showing that there is an infinite subsequence of the terms of the series which 
 is bounded away from zero. However, this double series could still converge conditionally to $p(x)$. 
 Is it possible to find a way to order the partial sums of the series  \eqref{eqn_p1}, \eqref{eqn_p2} so that they converge to $p(x)$ 
 {\it for all } irrational $\alpha$?

\vspace{0.2cm}
\noindent
{\bf Problem 3.} From Theorems \ref{thm_main} and \ref{main_theorem}  we know that when 
 $\alpha \notin {\mathcal L} \cup \q$ the series  \eqref{eqn_p1}, \eqref{eqn_p2} converge absolutely for all $\rho$, 
 and when $\alpha \in \tl$ these series do not converge absolutely for almost all $\rho$. This situation is reminiscent of Kolmogorov's zero-one law. Can this fact be established rigorously? Let us state this question more precisely: does there exist a value of $\alpha$ for which the series \eqref{eqn_p1}, \eqref{eqn_p2} converge absolutely on a set of $\rho$ of positive Lebesgue measure, and at the same time do not converge absolutely on a set of $\rho$ of positive Lebesgue measure?


\bibliographystyle{abbrv}
\bibliography{literature}



\end{document}